\theoremstyle{plain}
\newtheorem{thm}{Theorem}
\newtheorem{lem}[thm]{Lemma}
\newtheorem{cor}[thm]{Corollary}
\theoremstyle{definition}
\newtheorem{rem}[thm]{Remark}
\renewenvironment{proof}[1][]{\smallskip \noindent {\bf #1:}\hskip\labelsep}{\hspace*{\fill}$\square$\medskip\par} 
\let\bfseriesmakro\bfseries\def\bfseries{\bfseriesmakro\mathversion{bold}} 
\def\P{\mathbb{P}}
\def\E{\mathbb{E}}
\def\eps{{\varepsilon}}
\def\diff{\,\mathrm{d}}
\begin{document}

\date{\today}
\title{Improving the performance of polling models using forced idle times}
\author[1,2]{Frank Aurzada}
\author[2]{Sebastian Schwinn}
\affil[1]{Department of Mathematics, Technische Universit\"at Darmstadt}
\affil[2]{Graduate School CE, Technische Universit\"at Darmstadt}
\maketitle

\begin{abstract}
We consider polling models in the sense of Takagi~\cite{takagi}. 
In our case, the feature of the server is that it may be forced to wait idly for new messages at an empty queue instead of switching to the next station.
We propose four different wait-and-see strategies that govern these waiting periods.
We assume Poisson arrivals for new messages and allow general service and switchover time distributions.
The results are formulas for the mean average queueing delay and characterisations of the cases where the wait-and-see strategies yield a lower delay compared to the exhaustive strategy.
\end{abstract}

\noindent {\bf 2010 Mathematics Subject Classification:} 60K25 (primary); 90B22, 68M20 (secondary).

\bigskip

\noindent {\bf Keywords:} exhaustive service, forced idle time, patient server, polling model, pseudo-conservation law, timer, wait-and-see strategy, waiting time.

\section{Introduction}
\subsection{Model}
We investigate a polling model in the sense of~\cite{takagi} consisting of $N \geq 1$ stations which are served by one server.
The stations are labelled by the indices from~$1$ to~$N$ and served in ascending, cyclic order.

Each station~$i$ has its own queue which is fed by messages generated by a Poisson arrival process with intensity $\lambda_i$.
Each message has a random length (also called service time).
The mean and second moment of the message length distribution are assumed to be finite and denoted by $b_i$ and $b_i^{(2)}$, respectively.

Switching between stations takes a non-negative random idle time, called switchover time, where the server does not serve any messages at any station.
The random switchover time~$R_i$ from station~$i$ to the next station (with distribution function~$F_{R_i}$) is assumed to have finite mean~$r_i$ and finite second moment~$r_i^{(2)}$.
We consider both non-deterministic and deterministic switchover times (in the latter case $r_i^{(2)}=r_i^2$ for $i=1,\dots,N$).
The sum of the mean switchover times is denoted by $r_0 \coloneqq \sum_{i=1}^N r_i$ and the second moment of the sum of all switchover times by $r_0^{(2)} \coloneqq \sum_{i=1}^N r_i^{(2)} + \sum_{i,j=1,i\neq j}^N r_i r_j$.

The message generation process, the lengths of the messages, and the switchover times are assumed to be independent (everything among each other and with respect to the other processes and stations).

The goal is to obtain explicit formulas for the mean average queueing delay of a message in a polling model with a given wait-and-see strategy in steady state.
The delay is the time a message experiences from the point in time when it arrives in one of the queues until its service starts, i.e., excluding the service time.
The expected delay of a message generated at station~$i$ is denoted by $\E D_i$.
The mean average queueing delay is then defined by
$$
\bar{D} \coloneqq \sum_{i=1}^N \frac{\rho_i}{\rho_0}\, \E D_i,
$$
where $\rho_i \coloneqq \lambda_i b_i$ is the traffic load at station~$i$ and $\rho_0 \coloneqq \sum_{i=1}^N \rho_i$ is the total load offered to the system.
We stress that the delays of the different stations are weighted by the traffic intensity $\rho_i$, which implicitly includes weighting by the mean message lengths, whereas the delays $\E D_i$ do not include weighting the delay of the individual messages with their lengths.
The mean average queueing delay, which we often just abbreviate as \emph{delay}, is called \emph{intensity weighted mean waiting time} by Takagi~\cite{takagi}.

\subsection{Wait-and-see strategies}
First, we describe the behaviour of the server in general:
The server arrives at a station and starts serving in an exhaustive fashion, i.e., serving all waiting messages and newly arriving messages (first come, first served) until the queue is empty.
However, once the station is empty or if the server finds an empty station upon its arrival, the server may not immediately switch to the next station; it rather turns idle for some time in order to wait for possibly newly arriving messages (`wait-and-see').
As soon as a new message arrives, the server starts serving immediately and in an exhaustive fashion.
Once finished, the server may again turn idle and wait for new messages.

For each of the four strategies considered here, the behaviour of the server at station~$i$  is governed by a fixed, real parameter~$T_i \geq 0$ which has different interpretations (see below).
Of course, the server is not allowed to be idle if at its present station messages are waiting to be served.
The reason for waiting depends only on the current station in the current cycle, i.e., on the evolution of the traffic at the present station since the server arrived there.
The server must not use any information about the current queue status at other stations nor about the future of the arrival process at any station.
If $T_i=0$ holds, the service discipline is exhaustive at station~$i$ and there is no state of `wait-and-see' at station~$i$.
If this is the case for all stations, we call it the \emph{exhaustive strategy}.

Now, we specify the four different wait-and-see strategies.
Strategy~I is extensively analysed by Aurzada et al.\ \cite{aurzada} and Strategy~IV is examined by Boxma et al.\ \cite{boxma2002} for $N=2$ stations and $T_2=0$.
As far as we know, there are no results in the literature on Strategy~II and~III.
\begin{itemize}
 \item Under \textbf{Strategy~I}, the server has to wait idly the total time $T_i$ for new messages at station~$i$ per cycle.
 Depending on the arrival process, this credit $T_i$ is spent altogether in one single period or in some periods interleaved by different busy periods.
 \item \textbf{Strategy~II} is defined as follows:
 The server has to stay at least the minimum sojourn time $T_i$ at station~$i$ per cycle.
 We can regard it as a timer starting upon arrival of the server at this station.
 Once the server has spent the minimum sojourn time at the station (possibly consisting of several busy and waiting periods), the server exits the station if the queue is empty.
 However, if there are still messages waiting or in service as the timer runs out, the server continues serving in an exhaustive fashion and switches to the next station as soon as the queue is empty.
 \item \textbf{Strategy~III} is a modification of the previous one. 
 Here, the server is forced to stay at least the fixed time $T_i$ at station~$i$ after becoming idle for the first time at this station in this cycle.
 If there are no messages waiting upon arrival of the server, the timer starts immediately as in the case for Strategy~II.
 Otherwise, the timer starts running just after the first busy period.
 \item \textbf{Strategy~IV} is also similar to Strategy~II.
 However, the timer is only activated if the server finds station~$i$ empty upon arrival.
 In this case, the server remains dormant for at most the time $T_i$, waiting for the first arriving message.
 If the timer expires before the first arrival occurs, the server switches to the next station.
 On the other hand, if a new message arrives before the timer expires, the server starts serving immediately and in an exhaustive fashion.
 After this busy period, the server does not wait any longer at this station in the current cycle and switches to the next station.
\end{itemize}

We stress that we only deal with strategies where the wait-and-see timers are deterministic.
In order to yield a lower minimal delay, we conjecture that deterministic timers do a better job than random timers.
Simulations have indicated that such an additional randomness (of the timer) in the polling model has no positive effect on the minimal delay.

\subsection{Overview of contents}
The results of this paper are as follows:
\begin{itemize}
 \item We give a formula for the mean average queueing delay in a polling model with $N$ stations and Strategy~III (Theorem~\ref{thm:main}).
 \item We prove a formula for the mean average queueing delay in a polling model with $N=2$ stations and Strategy~II (Theorem~\ref{thm:main_2}).
 \item We extend~\cite{boxma2002} to timers at \emph{both} stations and give a formula for the mean average queueing delay for Strategy~IV (Theorem~\ref{thm:main_2}).
\item We characterise the cases for a polling model with $N=2$ stations where these strategies yield a lower delay compared to the exhaustive strategy (Theorems~\ref{thm:cond_2} and~\ref{thm:cond_2_sym}).
\end{itemize}

The remainder of this paper is structured in the following way:
In Section~\ref{subsec:rel_work}, we outline related work.
Section~\ref{sec:results} contains the formulas for the mean average queueing delay (Section~\ref{subsec:basic}) and the cases where it is worth waiting (Section~\ref{subsec:is_it_worth}).
All proofs of the results are collected in Section~\ref{sec:proofs}.

\subsection{Related work}\label{subsec:rel_work}
Aurzada et al.\ \cite{aurzada} analyse Strategy~I and give an explicit formula for the mean average queueing delay in a polling model with $N$ stations.
They characterise several cases where Strategy~I yields a lower delay compared to the exhaustive strategy.
In these cases, the optimal parameters $T_i$ can be computed explicitly.
Finally, they give a lower bound for the delay for a class of wait-and-see strategies which includes Strategy~\mbox{I--IV}.

In~\cite{boxma2002}, Boxma et al.\ focus on a two-queue polling model with a timer as in Strategy~IV at station~$1$ which may be random.
They examine different configurations: Either both stations are served exhaustively, or one station is controlled by the $1$-limited protocol whereas the other station is served in an exhaustive fashion.
The main results are the probability generating function of the queue lengths, expressions for pseudo-conservation laws, and the Laplace transform of the stationary waiting times.

Besides the main references~\cite{aurzada} and~\cite{boxma2002}, further papers deal with service strategies which have in common that the server does not necessarily switch to the next station when the current queue is empty.
Polling models with deterministic sojourn times and preemptive service are considered in~\cite{xie} and with exponentially distributed sojourn times in~\cite{dehaan}.
Similar to Strategy~IV, in the setting of~\cite{afanassieva} the server waits exactly for the first arriving message at an empty station.
In~\cite{cooper,pekoz,samaddar}, forced idle times are examined where the server is not allowed to resume service immediately as soon as a new message arrives during these idle periods.

Furthermore, there are several works that investigate polling models with time-limited service.
There, messages are served at a station for a certain period of time or until the queue is empty, whichever occurs first.
If there is still work at the station when the timer expires, the server either finishes all the present work, or completes only the service of the currently served message, or stops working immediately at this station and switches to the next station.
We refer to~\cite{alhanbali,dehaanphd,yechiali,leung,li} for random time limits (in particular exponentially distributed timers).
In~\cite{souza} and~\cite{frigui}, deterministic time limits are studied.

\section{Results}\label{sec:results}
In this section, we give formulas for the the mean average queueing delay and characterise the cases for the wait-and-see strategies where it is favourable (in the sense of a lower delay) to possibly wait at a station instead of switching.
From now on, we assume that the stability condition $\rho_0 < 1$ of the polling model holds.

\subsection{Basic theorems}\label{subsec:basic}
Theorems~\ref{thm:main} and~\ref{thm:main_2} provide formulas for the mean average queueing delay in terms
\begin{itemize}
 \item of the system parameters $\lambda_i$, $b_i$, $b_i^{(2)}$, $r_i$, $r_i^{(2)}$ for $i=1,\dots,N$, and
 \item of the parameter-dependent quantity $\textbf{S} \coloneqq \left(f_i, w_i, \tilde{r}_i \right)_{i=1,\dots,N}$ of expectations in steady state which are defined in the next paragraph
 and which vary depending on the wait-and-see strategy including the parameters~$T_i$.
Specifying these expectations for Strategy~\mbox{II--IV} in Section~\ref{subsec:deter} is the main novelty in this paper.
\end{itemize}

We define the expected time per cycle which the server waits at station~$i$ by~$f_i$.
We use $f_0 \coloneqq \sum_{i=1}^N f_i$ for the total expected waiting time of the server per cycle (i.e., idle times without switchover times).
The expected backward recurrence time (expected spent time) $w_i$ is defined by the expectation of the elapsed time since arriving at station~$i$ at a random point in time while waiting at station~$i$.
Furthermore, we introduce the conditional mean switchover time~$\tilde{r}_i$ from station~$i$ to the next station:
Given a random point in time while waiting at this next station, $\tilde{r}_i$ is the expected length of the preceding switchover time.

\begin{thm}\label{thm:main}
The mean average queueing delay of a message in a polling model with Strategy~III is given by
\begin{equation*}
\begin{split}
 \bar{D} = & \; \frac{\sum_{i=1}^N \lambda_{i} b_{i}^{(2)}}{2(1-\rho_{0})} + \frac{(r_0+f_0) \left(\rho_{0}^{2} -\sum_{i=1}^N \rho_{i}^{2} \right)}{2 \rho_{0} (1-\rho_0)} + \frac{\frac{1}{2} \rho_{0}r_{0}^{(2)}+r_{0} \sum_{i=1}^N f_{i}(\rho_{0} - \rho_{i})}{\rho_{0} (r_0+f_0)} \\
 & + \frac{1}{\rho_{0}(r_0+f_0)} \left[ \sum_{i=1}^N f_{i} w_i (\rho_{0} - \rho_{i}) + \sum_{1 \leq i < j \leq N} f_i f_j (\rho_0 - \rho_i - \rho_j) \right] \\
 & - \frac{\sum_{i=1}^N  f_i \rho_i(\rho_0 - \rho_i)}{\rho_{0} (1- \rho_{0})}. 
\end{split}
\end{equation*}
\end{thm}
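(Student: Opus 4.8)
The plan is to derive this via a pseudo-conservation law approach, which is the standard and most powerful technique for polling models of this type. The overarching strategy is to compute the mean amount of work in the system at a random epoch in two different ways and equate them, since for a general polling model the weighted sum $\sum_i \rho_i \E D_i$ is intimately tied to the mean stationary workload. Concretely, I would start from the Boxma–Groenendijk pseudo-conservation law, which expresses $\sum_{i=1}^N \rho_i \E D_i$ (equivalently $\rho_0 \bar D$) as a known base term plus a correction term $\sum_{i=1}^N \E(M_i)$, where $M_i$ is the amount of work at station $i$ at a switching instant (the time the server departs station $i$). The base term already produces the first two lines of the claimed formula — the $\sum \lambda_i b_i^{(2)}/(2(1-\rho_0))$ piece is the standard M/G/1 workload contribution, and the switchover-time pieces involving $r_0$, $r_0^{(2)}$, and $\rho_0^2 - \sum \rho_i^2$ come from the well-known cycle-structure terms, now modified because the total ``overhead'' per cycle is $r_0 + f_0$ rather than just $r_0$ (the forced idle time $f_0$ inflates the cycle length exactly as switchover time does).

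The heart of the proof is to compute the station-dependent correction terms $\E(M_i)$ under Strategy~III, and this is where the parameter-dependent quantities $\mathbf S = (f_i, w_i, \tilde r_i)$ enter. I would decompose the work left at station $i$ when the server departs into contributions from the arrival streams during the various sub-periods of a cycle: the service periods at the other stations, the switchover periods, and crucially the \emph{waiting} (forced-idle) periods. Each such period of expected length $\ell$ generates, for station $j$, an expected work contribution proportional to $\rho_j \ell$ together with a backward-recurrence-time correction; the quantities $w_i$ (expected elapsed time since arrival at station $i$, observed during waiting) and $\tilde r_i$ (the conditional preceding switchover time observed during waiting at the next station) are precisely the renewal-theoretic inspection-paradox corrections needed to account for work that accumulated during a waiting or switchover interval that is currently in progress. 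Assembling these contributions, weighting by the appropriate $(\rho_0 - \rho_i)$ or $(\rho_0 - \rho_i - \rho_j)$ factors that track which stations' work is still ``in the system'' at the relevant departure instant, should reproduce the remaining terms: the $\sum f_i w_i(\rho_0 - \rho_i)$ and $\sum_{i<j} f_i f_j(\rho_0 - \rho_i - \rho_j)$ terms in the third line, the $r_0 \sum f_i(\rho_0 - \rho_i)$ cross-term, and the final negative correction $\sum f_i \rho_i (\rho_0 - \rho_i)/(\rho_0(1-\rho_0))$.

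The main obstacle I anticipate is the bookkeeping for the \emph{waiting periods specific to Strategy~III}, namely getting the $w_i$ and the $f_i f_j$ cross-terms exactly right. Because Strategy~III forces the server to remain a fixed additional time $T_i$ only after it first becomes idle, the waiting period has a subtle internal structure: its length and its position within the cycle depend on whether the server found the queue empty on arrival (timer starts immediately) or busy (timer starts after the first busy period). Correctly identifying the expected waiting time $f_i$, the expected elapsed waiting time $w_i$, and how work from \emph{other} stations accumulates during station $i$'s waiting period — without double counting against the exhaustive-service dynamics — is delicate. I would isolate this by writing $\E(M_i)$ as the exhaustive-strategy value plus an explicit perturbation driven by $f_i$ and $w_i$, then verify that summing these perturbations over $i$ and substituting into the pseudo-conservation law collapses to the stated closed form. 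A useful consistency check throughout is to set all $f_i = w_i = 0$ and confirm the expression reduces to the classical Boxma–Groenendijk formula for the purely exhaustive polling model.
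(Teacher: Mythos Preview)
Your overall framework---a workload decomposition in the spirit of Boxma--Groenendijk, relating $\rho_0\bar D$ to the stationary mean workload---is exactly what the paper uses. However, the specific mechanism you propose, namely computing correction terms $\E(M_i)$ defined as the work left at station~$i$ when the server departs that station, will not produce the extra terms for Strategy~III. Under Strategy~III the server always leaves station~$i$ with queue~$i$ empty (it is exhaustive-like: after the timer $T_i$ it continues serving until idle), so $M_i=0$ identically, just as in the pure exhaustive case. The $f_i$, $w_i$, and $f_if_j$ terms therefore cannot arise from $\E(M_i)$; they come from somewhere else in the decomposition.

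The paper's route makes this explicit. Rather than looking at departure instants, it conditions the workload on the server's \emph{idle state}: $\E V=\E V^{\rm M/G/1}+q\,\E V^{\rm switching}+(1-q)\,\E V^{\rm waiting}$, with $q=\P(\text{switching}\mid\text{idle})$. The term $\E V^{\rm switching}$ is handled exactly as in the classical exhaustive analysis. The new object is $\E V_i^{\rm waiting}$, the total system workload at a random epoch while the server is waiting at station~$i$; this is where $w_i$ enters directly as the elapsed time since arrival at~$i$, multiplied by $(\rho_0-\rho_i)$, and the $f_if_j$ cross-terms come from the waiting periods at other stations earlier in the cycle. Your ingredients (inspection paradox, $w_i$) are the right ones, but they belong to $\E V^{\rm waiting}$, not to $\E(M_i)$. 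Also note that $\tilde r_i$ does not appear in Theorem~1 at all: for Strategy~III a waiting period occurs every cycle (whenever $T_i>0$) regardless of the preceding switchover, so $\tilde r_i=r_i$ and no conditioning on the switchover is needed.
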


We refer to Aurzada et al.\ \cite{aurzada} for the delay of a message in a polling model with Strategy~I. 
For Strategy~II and~IV, we restrict the number of stations to $N=2$ due to the technical effort 
that would be required otherwise to compute further parameter-dependent quantities which would arise in the formula for the delay.

\begin{thm}\label{thm:main_2}
The mean average queueing delay of a message in a polling model with $N=2$ stations and Strategy~\mbox{II--IV} is given by
\begin{equation}\label{eq:D}
\begin{split}
 \bar{D} = & \; \frac{\sum_{i=1}^2 \lambda_i b_i^{(2)}}{2(1- \rho_0)} + \frac{r_0 \rho_1 \rho_2}{\rho_0 (1- \rho_0)} + \frac{r_0^{(2)}}{2(r_0+f_0)} \\
 & + \frac{\rho_2 f_1}{\rho_0 (r_0+f_0)} (r_1 + \tilde{r}_2 + w_1) \\
 & + \frac{\rho_1 f_2}{\rho_0 (r_0+f_0)} (\tilde{r}_1 + r_2 + w_2). 
\end{split}
\end{equation}
\end{thm}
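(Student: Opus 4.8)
The plan is to derive~\eqref{eq:D} from a pseudo-conservation law in the spirit of Boxma and Groenendijk, adapted to the present model, in which the non-serving part of a cycle consists not only of the two switchover intervals but also of the forced idle intervals at the two stations. Writing $V$ for the stationary total amount of unfinished work in the system, I would first record the elementary workload identity
\begin{equation*}
 \E V = \sum_{i=1}^2 \rho_i\, \E D_i + \sum_{i=1}^2 \frac{\lambda_i b_i^{(2)}}{2},
\end{equation*}
which splits the workload into the full service requirements of the waiting messages (by Little's law there are on average $\lambda_i\E D_i$ of them, each contributing $b_i$) plus the mean residual service $\tfrac{b_i^{(2)}}{2b_i}$ of the message in service, weighted by $\rho_i$. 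Since $\bar D = \rho_0^{-1}\sum_i \rho_i \E D_i$, this already reduces the theorem to the computation of $\E V$.

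For a second expression I would invoke the workload decomposition $V \stackrel{d}{=} V_{\mathrm{M/G/1}} + Y$, where $V_{\mathrm{M/G/1}}$ is the workload in the associated $\mathrm{M/G/1}$ queue, with $\E V_{\mathrm{M/G/1}} = \sum_i \lambda_i b_i^{(2)}/(2(1-\rho_0))$, and $Y$ is the work present at an arbitrary epoch during a non-serving interval, independent of $V_{\mathrm{M/G/1}}$. Subtracting $\sum_i \lambda_i b_i^{(2)}/2$ then turns the target into an identity for $\E Y$, so the whole problem becomes the evaluation of
\begin{equation*}
 \E Y = \frac{1}{r_0 + f_0}\, \E\!\left[\int_{\mathrm{NS}} V(t)\diff t\right],
\end{equation*}
the time-stationary workload over the non-serving set, using that the mean total non-serving time per cycle equals $r_0+f_0$ and the mean cycle length is $\E C = (r_0+f_0)/(1-\rho_0)$.

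The core step is to split the non-serving set into its four interval types --- the switchover $1\to 2$, the switchover $2\to 1$, and the idle periods at stations $1$ and $2$ --- and to compute $\E[\int V\diff t]$ over each. The two decisive observations are that during an idle interval at station $i$ the active queue is empty, so only the other queue contributes and it grows at rate $\rho_{3-i}$, whereas during a switchover both queues accumulate at total rate $\rho_0$. For each interval I would write the workload as the work present at the start plus the Poisson accumulation during it, and evaluate the relevant elapsed times against the defined quantities: the average spent time at station $i$ over a waiting epoch is exactly $w_i$, and the average preceding switchover length seen at a waiting epoch is exactly $\tilde r_i$. In this way the work at the non-active queue during an idle interval at station $i$ is expressed through $\rho_{3-i}(\tilde r_{3-i}+w_i)$, while the switchover integrals produce the classical second-moment term $\tfrac12\rho_0 r_0^{(2)}$ and the cyclic cross term $r_0\rho_1\rho_2\,\E C$, together with the mixed pieces $\rho_{3-i} f_i r_i$ coming from the extra work that the idle time at station $i$ leaves behind for the ensuing switchover of mean length $r_i$.

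The main obstacle is precisely this interval-by-interval bookkeeping: because the work seen during one interval depends on how much the idle time at the preceding visit allowed the other queue to build up, the natural contributions do not align one-to-one with the terms of the stated formula, and the pieces $\rho_{3-i} f_i r_i$ have to be regrouped with the idle contributions to form $\rho_{3-i} f_i(r_i+\tilde r_{3-i}+w_i)$. The structural point that makes the argument tractable is that the entire strategy-specific behaviour enters only through the three steady-state quantities $f_i$, $w_i$, $\tilde r_i$, so the computation can be done once, at the level of their abstract definitions, and applies simultaneously to Strategies~II,~III and~IV (the evaluation of $f_i$, $w_i$, $\tilde r_i$ for each strategy being deferred to Section~\ref{subsec:deter}). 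Finally I would collect the switchover and idle contributions, divide by $r_0+f_0$, add back $\E V_{\mathrm{M/G/1}}-\sum_i \lambda_i b_i^{(2)}/2$, and simplify using $\rho_0^2-\rho_1^2-\rho_2^2=2\rho_1\rho_2$ to obtain~\eqref{eq:D}.
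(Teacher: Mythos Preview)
Your proposal is correct and follows essentially the same approach as the paper: both proofs combine the workload identity $\E V=\rho_0\bar D+\sum_i\lambda_i b_i^{(2)}/2$ with the Boxma--Groenendijk decomposition $\E V=\E V^{\mathrm{M/G/1}}+q\,\E V^{\mathrm{switching}}+(1-q)\,\E V^{\mathrm{waiting}}$, and your identification of the waiting contribution as $\rho_{3-i}(\tilde r_{3-i}+w_i)$ is exactly the paper's computation $\E V_i^{\mathrm{waiting}}=\rho_{3-i}(\tilde r_{3-i}+w_i)$. The only cosmetic difference is that the paper handles the switching contribution by referring back to Theorem~\ref{thm:main} and~\cite{aurzada}, whereas you sketch that part explicitly (the $\tfrac12\rho_0 r_0^{(2)}$, the cross term $r_0\rho_1\rho_2\,\E C$, and the mixed pieces $\rho_{3-i}f_i r_i$).
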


Both Theorems~\ref{thm:main} and~\ref{thm:main_2} are valid for general distributions of the service times.
However, we emphasise that for Strategy~II and~III we are only able to compute the quantity~$\textbf{S} = \left(f_i, w_i, \tilde{r}_i \right)_i$ explicitly for exponentially distributed service times because formula~\eqref{eq:P_jk} below is only available for the M/M/1 queue in the literature, for instance.
The computation is specified in Section~\ref{subsec:deter}.

\begin{figure}[ht]
\includegraphics[width=1\linewidth]{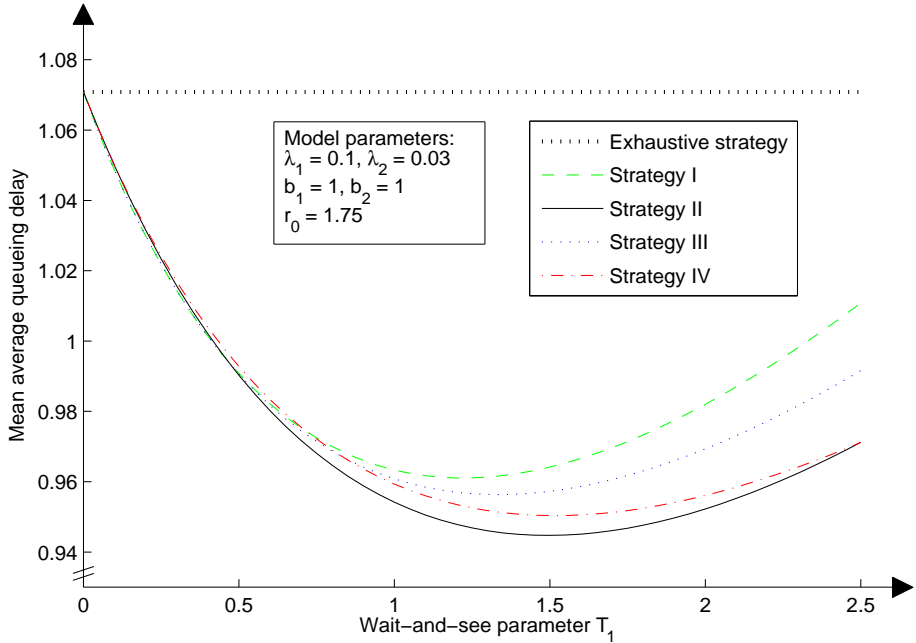}  
\caption{Comparison of the delays for the strategies vs.\ the wait-and-see parameter~$T_1$.}\label{fig:delay_all}
\end{figure}

Figure~\ref{fig:delay_all} provides a typical relation between the delays for all four wait-and-see strategies.
We consider a polling model with $N=2$ stations where the server is not allowed to wait at station~$2$.
The switchover times are deterministic, symmetrically split among the switchovers and the service times are exponentially distributed.
The delay for Strategy~I was obtained by the formula from~\cite{aurzada}, and we used Theorem~\ref{thm:main_2} and the value for $\textbf{S} = \left( f_i, w_i, \tilde{r}_i \right)_i$ from Section~\ref{subsec:deter} to compute the data for Strategy~\mbox{II--IV}.

The ranking of the wait-and-see strategies with respect to the minimal delay observed in Figure~\ref{fig:delay_all} can be explained naturally:
In the best case, the server exits the current station as soon as there is enough work waiting at the other station.
Since the server does not have any information about the queue status at the other station, the sojourn time at the current station is the crucial quantity in order to estimate the workload generated at the other station.
Hence, there is an optimal sojourn time for each station, and the minimal delay is attained if the expected sojourn time agrees with the optimal sojourn time best with a small variance.

Therefore, we conjecture that Strategy~III always yields a lower minimal delay compared to Strategy~I and that Strategy~II is the best of the investigated wait-and-see strategies. 

\subsection{Is it worth waiting?}\label{subsec:is_it_worth}
Theorem~\ref{thm:main_2} allows us to put the following question:
Given the system parameters, how does one have to adjust the parameters $T_i\geq 0$ such that the delay is minimised.
We can not compute a minimiser of this problem
$$
\min_{T_1 \geq 0, \, T_2 \geq 0} \bar{D}(T_1,T_2) 
$$
for Strategy~\mbox{II--IV} analytically.
Nevertheless, we do obtain necessary and sufficient conditions for these wait-and-see strategies in a polling model with exponentially distributed service times such that it is favourable to wait in comparison to the exhaustive strategy.
As a summary one can say that the benefit of waiting arises from the asymmetry of the system or from non-deterministic switchover times.

We say that `it is worth waiting (at station $i$)' if there is a $T_i>0$ such that the delay is lower than for the exhaustive strategy, i.e., $\bar{D}(T_1,T_2) < \bar{D}(0,0)$. 
Note that we only consider the two cases with the additional restriction $T_2=0$ and $T_1=T_2$, respectively.

\begin{thm}\label{thm:cond_2}
Let $T_2=0$.
It is worth waiting at station~$1$ in a polling model with $N=2$ stations and
\begin{itemize}
\item Strategy~III if and only if 
$$
\frac{r_0^{(2)}}{2r_0^2} - \frac{\rho_2 (1- \rho_2)}{\rho_0 (1- \rho_0)} > 0,
$$
\item Strategy~II as well as Strategy~IV if and only if 
\begin{equation}\label{eq:cond_IV}
\frac{r_0^{(2)}}{2r_0 \left(r_1 + \tilde{r}^{\rm{IV}}_2 \right)} - \frac{\rho_2}{\rho_0} > 0,
\end{equation}
where the quantity $\tilde{r}^{\rm{IV}}_2$ is given by~\eqref{eq:r_IV} below.
In the case of a deterministic switchover time $R_2$, inequality~\eqref{eq:cond_IV} simplifies to $\rho_1 > \rho_2$.
\end{itemize}
\end{thm}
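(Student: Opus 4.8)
The plan is to reduce the statement to a one–parameter sign condition obtained from Theorem~\ref{thm:main_2}. Since $T_2=0$ means station~$2$ is served exhaustively, the server never idles there, so $f_2=0$ and $f_0=f_1$. Substituting this into~\eqref{eq:D} makes the last line vanish and leaves
\begin{equation*}
\bar{D}(T_1,0) = C + \frac{r_0^{(2)}}{2(r_0+f_1)} + \frac{\rho_2}{\rho_0}\cdot\frac{f_1\,(r_1+\tilde{r}_2+w_1)}{r_0+f_1},
\end{equation*}
where $C \coloneqq \frac{\sum_{i=1}^2\lambda_i b_i^{(2)}}{2(1-\rho_0)} + \frac{r_0\rho_1\rho_2}{\rho_0(1-\rho_0)}$ is independent of $T_1$, and $f_1,w_1,\tilde{r}_2$ are the strategy–dependent quantities, now regarded as functions of $T_1$. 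At $T_1=0$ the discipline is exhaustive at both stations, so $f_1(0)=0$ and $\bar{D}(0,0)=C+r_0^{(2)}/(2r_0)$.

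First I would argue that it is worth waiting precisely when the right derivative $\frac{\diff}{\diff T_1}\bar{D}(T_1,0)\big|_{T_1=0^+}$ is negative. The forward implication is immediate; for the converse one must exclude that $\bar{D}(\cdot,0)$ dips below $\bar{D}(0,0)$ only for larger $T_1$ after an initial increase. I expect this to follow from the monotone structure of the explicit expressions (as $T_1\to\infty$ the waiting time $f_1\to\infty$ and $\bar{D}$ diverges), so that $\bar{D}(T_1,0)-\bar{D}(0,0)$ changes sign at most once; making this reduction rigorous is one of the two delicate points.

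Next I would insert the closed forms of $f_1(T_1)$, $w_1(T_1)$ and $\tilde{r}_2(T_1)$ from Section~\ref{subsec:deter} (available for exponential service via the M/M/1 identity~\eqref{eq:P_jk}) and differentiate. Because $f_1(0)=0$, every contribution carrying $w_1'$ or $\tilde{r}_2'$ is multiplied by $f_1$ and drops out, so only the slope $f_1'(0^+)>0$ and the limits $w_1(0^+)$, $\tilde{r}_2(0^+)$ survive. Using $r_1+r_2=r_0$, the derivative collapses to $f_1'(0^+)$ times the bracket
\begin{equation*}
\frac{\rho_2}{\rho_0}\cdot\frac{r_1+w_1(0^+)+\tilde{r}_2(0^+)}{r_0} - \frac{r_0^{(2)}}{2r_0^2},
\end{equation*}
and since $f_1'(0^+)>0$ cancels, it is worth waiting exactly when this bracket is negative. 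For Strategies~II and~IV one has $w_1(0^+)=0$ and $\tilde{r}_2(0^+)=\tilde{r}_2^{\rm IV}$ from~\eqref{eq:r_IV}; dividing through by $(r_1+\tilde{r}_2^{\rm IV})/r_0>0$ turns the negativity into~\eqref{eq:cond_IV}, and the two strategies coincide because they share the same limits as $T_1\downarrow 0$. For Strategy~III the busy-period structure (the timer starts only after the first busy period) changes these limits so that $r_1+w_1(0^+)+\tilde{r}_2(0^+)=r_0(1-\rho_2)/(1-\rho_0)$, producing the factor $(1-\rho_2)/(1-\rho_0)$ and the stated condition. Computing these limits from the M/M/1 expressions — in particular tracing how the $(1-\rho_2)/(1-\rho_0)$ factor arises for Strategy~III — is the step I expect to be the main obstacle.

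Finally, for the deterministic case of $R_2$ I would set $r_2^{(2)}=r_2^2$, so that $r_0^{(2)}=r_1^{(2)}+r_2^2+2r_1r_2$, and substitute the corresponding value of $\tilde{r}_2^{\rm IV}$ into~\eqref{eq:cond_IV}. After clearing denominators I expect the switchover second moments to cancel, collapsing the inequality to the clean load comparison $\rho_1>\rho_2$, which completes the proof.
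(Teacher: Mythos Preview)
Your reduction to the single bracket is correct and matches the paper's key identity, but the paper arrives there by an algebraic factorisation rather than by differentiation: from~\eqref{eq:delta_D} one gets
\[
\Delta\bar D \;=\; \frac{f_1}{r_0(r_0+f_1)}\left[-\frac{r_0^{(2)}}{2}+\frac{\rho_2}{\rho_0}\,r_0\bigl(r_1+\tilde r_2+w_1\bigr)\right],
\]
so (since $f_1>0$ for every $T_1>0$) ``worth waiting'' is \emph{exactly} the existence of some $T_1>0$ making the bracket negative. This sidesteps the necessity issue you flag: one never has to rule out a later dip of $\bar D$, only to show that the bracket, as a function of $T_1$, attains its infimum at $T_1\downarrow 0$. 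For Strategy~III this follows because the extra pieces of $w_1$ beyond the first busy period (your $\Delta_1$) and $q_1(T_1)$ are nonnegative; for Strategy~IV because $w_1\le T_1$ and $\tilde r_2^{\rm IV}$ is independent of $T_1$. Your derivative argument recovers the same limiting bracket but leaves the global ``at most one sign change'' unjustified; divergence of $\bar D$ as $T_1\to\infty$ does not by itself give that.

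More importantly, you have the difficulty inverted: the Strategy~III limit $r_1+w_1(0^+)+\tilde r_2(0^+)=r_0(1-\rho_2)/(1-\rho_0)$ is a short computation from~\eqref{eq:wi_III} and~\eqref{eq:c2_III} (indeed $w_1(0^+)=r_0\rho_1/(1-\rho_0)$ and $\tilde r_2=r_2$). The genuinely hard step is Strategy~II, where you assert that $\tilde r_2^{\rm II}(0^+)=\tilde r_2^{\rm IV}$ because the two strategies ``share the same limits''. This is exactly what requires work: $\tilde r_2^{\rm II}$ from~\eqref{eq:r_II} is an infinite mixture $\sum_k p_k^{(1)}\E[R_2\mid C_k]$ whose weights $p_k^{(1)}$ depend on $T_1$ through the $P_{k,0}$ integrals, and one must show the $k\ge 1$ tail vanishes as $T_1\downarrow 0$. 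The paper does this via two auxiliary results: a polynomial bound $\E[R_2\mid B_j]\le \alpha(j^2+1)$ (Lemma~\ref{lem:r_cond}) and a uniform lower bound on $\pi_0^{(1)}$ (Lemma~\ref{lem:pi_0}), together with an Erlang-tail estimate giving $p_k^{(1)}\lesssim (\mu_1 T_1)^k$. It also needs the Cauchy--Schwarz monotonicity $\E[R_2\mid B_j]\le \E[R_2\mid B_{j+1}]$ to get the reverse inequality $\tilde r_2^{\rm II}\ge \tilde r_2^{\rm IV}$. None of this is visible in your plan, and without it the Strategy~II case does not close. Finally, for the deterministic simplification, setting only $r_2^{(2)}=r_2^2$ will not collapse~\eqref{eq:cond_IV} to $\rho_1>\rho_2$; you need $r_0^{(2)}=r_0^2$, i.e., both switchover times deterministic.
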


\begin{rem}
We observe that for a sufficiently large relative variance of the sum of the switchover times $ \frac{r_0^{(2)}-r_0^2}{r_0^2}$ it is even worth waiting at station~$1$ in spite of a lower traffic load $\rho_1 < \rho_2$.
As a consequence of this, it is favourable to have \emph{positive} parameters $T_i$ at both stations instead of just allowing `wait-and-see' at the station with higher traffic load.
\end{rem}

Similar to above, we get necessary and sufficient conditions for a symmetric polling model with $\rho_1=\rho_2$ such that it is worth waiting with the restriction $T_1=T_2$.
The arrival rates, message length and switchover time distributions are also assumed to be the same for both stations for Strategy~II and~IV but we can omit this requirement for Strategy~III.

\begin{thm}\label{thm:cond_2_sym}
It is worth waiting with $T_1=T_2$ in a symmetric polling model with $N=2$ stations and
\begin{itemize}
 \item Strategy~III if and only if
$$
\frac{r_0^{(2)}}{r_0^2} - \frac{1-\rho_1}{1-\rho_0} > 0
$$
(that can only be satisfied for non-deterministic switchover times),
\item Strategy~II as well as Strategy~IV if and only if the switchover times are non-deterministic.
\end{itemize}
\end{thm}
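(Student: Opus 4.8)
The plan is to exploit that, in the delay formula of Theorem~\ref{thm:main_2}, the only dependence on the timers enters through the quantities $f_i,w_i,\tilde r_i$. Set $A_1 \coloneqq r_1+\tilde r_2+w_1$ and $A_2 \coloneqq \tilde r_1+r_2+w_2$. Specialising to $\rho_1=\rho_2$, so that $\rho_1/\rho_0=\rho_2/\rho_0=1/2$, and to $T_1=T_2=T$, all $T$-independent terms cancel in $\bar D(T,T)-\bar D(0,0)$ and the strategy-dependent part reduces to
$$
G(T)=\frac{\tfrac12 r_0^{(2)}+\tfrac12\big(f_1A_1+f_2A_2\big)}{r_0+f_0},\qquad G(0)=\frac{r_0^{(2)}}{2r_0}.
$$
Cross-multiplying the inequality $G(T)<G(0)$ by the positive factor $2r_0(r_0+f_0)$ and cancelling the term $r_0\,r_0^{(2)}$, I find that it is worth waiting exactly when there is a $T>0$ with
$$
f_1\big(r_0A_1-r_0^{(2)}\big)+f_2\big(r_0A_2-r_0^{(2)}\big)<0 .
$$
The crucial feature is that the common prefactor cancels, so the waiting times $f_i>0$ survive only as positive weights.

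Under the full symmetry imposed for Strategy~II and~IV one has $f_1=f_2$ and a common value $A\coloneqq A_1=A_2$, so the condition collapses to $A(T)<r_0^{(2)}/r_0$; it is therefore worth waiting if and only if $\inf_{T>0}A(T)<r_0^{(2)}/r_0$. I evaluate this infimum from the explicit expressions for $f_i,w_i,\tilde r_i$ in Section~\ref{subsec:deter}, expecting $A$ to be minimised as $T\to0^+$. For Strategy~IV the waiting occurs at the start of the sojourn only, so $w_i(0^+)=0$, while the limit of $\tilde r_2$ is the mean residual $r_0^{(2)}/(2r_0)$ of the per-cycle switchover time; with $r_1=r_0/2$ this turns $A(0^+)<r_0^{(2)}/r_0$ into $r_0^2<r_0^{(2)}$, i.e.\ non-deterministic switchover times. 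For Strategy~II the same answer arises because, as $T\to0^+$, a first busy period (when the station is non-empty on arrival) already exceeds the vanishing minimum sojourn time, so waiting survives only when the server arrives at an empty station; Strategy~II thus degenerates to Strategy~IV in the limit, giving again $w_i(0^+)=0$ and the condition $r_0^2<r_0^{(2)}$.

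For Strategy~III the behaviour in the limit is genuinely different: the forced stay always follows the first busy period, so a waiting instant still carries the elapsed first busy period and $w_i(0^+)>0$. The computation then yields $A(0^+)=\tfrac{r_0(2-\rho_0)}{2(1-\rho_0)}=\tfrac{r_0(1-\rho_1)}{1-\rho_0}$, and dividing $A(0^+)<r_0^{(2)}/r_0$ by $r_0$ together with $\rho_1=\rho_0/2$ gives exactly $\tfrac{r_0^{(2)}}{r_0^2}>\tfrac{1-\rho_1}{1-\rho_0}$; since $\tfrac{1-\rho_1}{1-\rho_0}>1$ for $0<\rho_1<\rho_0<1$, this forces non-deterministic switchover times. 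Moreover, for Strategy~III one checks that the two right-limits coincide, $A_1(0^+)=A_2(0^+)=\tfrac{r_0(1-\rho_1)}{1-\rho_0}$, depending only on $\rho_0$, $r_0$ and $r_0^{(2)}$, because the individual $r_i$ are absorbed by the limits of $w_i$ and $\tilde r_j$. The weighted inequality of the first paragraph then reduces, exactly as in the symmetric case, to $A(0^+)<r_0^{(2)}/r_0$, which is why the full symmetry assumption may be dropped for Strategy~III.

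The step I expect to be hardest is the infimum evaluation: proving that $\inf_{T>0}A_i(T)$ is attained in the limit $T\to0^+$, equivalently the lower bound $A_i(T)\geq A_i(0^+)$ for all $T>0$, and computing the limits correctly. These limits are delicate because $w_i$ and $\tilde r_j$ are defined by length-biased sampling (``at a random point in time while waiting''), so even though the waiting fraction $f_i\to0$ the conditional residuals persist and produce the second-moment contribution $r_0^{(2)}/(2r_0)$. The lower bound $A_i(T)\geq A_i(0^+)$ is indispensable for the ``only if'' direction: any interior dip of $A_i$ below its boundary value would render waiting profitable even for deterministic switchover times, contradicting the stated characterisation. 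I expect both the monotonicity and the explicit limits to follow from the M/M/1 formulas of Section~\ref{subsec:deter}.
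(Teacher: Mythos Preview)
Your overall architecture matches the paper's: reduce $\bar D(T,T)<\bar D(0,0)$ to the sign of $r_0A_i-r_0^{(2)}$ with $A_i=r_i+\tilde r_{3-i}+w_i$, and then examine $A_i$ as $T\to0^+$. Your treatment of Strategy~III is correct and agrees with the paper's computation: with $\tilde r_i=r_i$ and $w_1(0^+)=\rho_1(r_0+c_2(0^+))/(1-\rho_1)=r_0\rho_1/(1-\rho_0)$ you indeed get $A_i(0^+)=r_0(1-\rho_1)/(1-\rho_0)$, and the ``only if'' follows from $w_1\ge$ (first busy period) together with $q_1(T),\Delta_1(T)\ge0$.

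There is, however, a genuine error for Strategy~IV. You write that ``the limit of $\tilde r_2$ is the mean residual $r_0^{(2)}/(2r_0)$ of the per-cycle switchover time''. This is not what $\tilde r_2$ is. By definition $\tilde r_2$ is the \emph{length} of the preceding switchover $R_2$ conditioned on the server being in a waiting period at station~$1$; for Strategy~IV this conditioning is exactly the event $C_0$ that station~$1$ is empty on arrival, and since the subsequent waiting length $\min(E_1,T_1)$ is independent of $R_2$ there is no further size-biasing. Hence, as in~\eqref{eq:r_IV},
\[
\tilde r_2^{\rm IV}=\E[R_2\mid B_0]=\frac{\int_0^\infty x\,e^{-\lambda_1 x}\,dF_{R_2}(x)}{\int_0^\infty e^{-\lambda_1 x}\,dF_{R_2}(x)},
\]
a quantity that does not depend on $T$ at all and is \emph{not} $r_0^{(2)}/(2r_0)$. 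The paper then finishes by the elementary inequality $\tilde r_2^{\rm IV}\le r_2$ (conditioning on $B_0$ biases toward short $R_2$), which together with $r_0^{(2)}\ge r_0^2$ yields the ``iff non-deterministic'' characterisation; your computation reaches the same conclusion only by accident. This also means the ``only if'' for Strategy~IV is immediate from $w_1\ge0$ and the $T$-independence of $\tilde r_2^{\rm IV}$---no monotonicity of $\tilde r_2$ in $T$ is needed.

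For Strategy~II your heuristic ``degenerates to Strategy~IV as $T\to0^+$'' is correct in spirit but is precisely the non-trivial step. One direction uses $\tilde r_2^{\rm II}(T)\ge\tilde r_2^{\rm IV}$, proved via~\eqref{eq:r_II}, \eqref{eq:r_cond_Ck} and the Cauchy--Schwarz inequality $\E[R_2\mid B_j]\le\E[R_2\mid B_{j+1}]$. The other direction needs quantitative control of $\sum_{k\ge1}p_k^{(1)}\E[R_2\mid B_k]$ as $T\to0$, for which the paper invokes Lemmas~\ref{lem:r_cond} and~\ref{lem:pi_0}; without these estimates the limit you assert cannot be justified.
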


We give a direct consequence of the two preceding theorems:

\begin{cor}\label{cor:conseq}
There are parameter settings of a polling model with $N=2$ stations where Strategy~II and~IV yield a lower delay than Strategy~I and~III, i.e., it is only worth waiting with Strategy~II and~IV.
\end{cor}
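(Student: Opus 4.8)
The goal of Corollary~\ref{cor:conseq} is to exhibit a single parameter configuration in which Strategy~II and~IV strictly outperform the exhaustive strategy (so that it is worth waiting), while Strategy~I and~III do not. The natural plan is to search the two regimes isolated in Theorems~\ref{thm:cond_2} and~\ref{thm:cond_2_sym}, since these already give clean if-and-only-if characterisations that differ across the strategies. The cleanest route is via the symmetric case of Theorem~\ref{thm:cond_2_sym}: there, Strategy~II and~IV are worth waiting \emph{iff} the switchover times are non-deterministic, whereas Strategy~III is worth waiting iff $\frac{r_0^{(2)}}{r_0^2} - \frac{1-\rho_1}{1-\rho_0} > 0$, a strictly stronger demand. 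So I would fix a symmetric polling model with $\rho_1 = \rho_2$ and non-deterministic switchover times, then tune the relative variance of the switchover sum to sit in the gap between the two conditions.

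Concretely, first I would note that in the symmetric case $\rho_0 = 2\rho_1$, so $\frac{1-\rho_1}{1-\rho_0} = \frac{1-\rho_1}{1-2\rho_1} > 1$ for any $\rho_1 \in (0, 1/2)$. Strategy~III therefore requires $\frac{r_0^{(2)}}{r_0^2} > \frac{1-\rho_1}{1-2\rho_1}$, a ratio bounded strictly above $1$. Second, I would choose switchover times whose relative variance is positive but small enough that $1 < \frac{r_0^{(2)}}{r_0^2} < \frac{1-\rho_1}{1-2\rho_1}$. Any non-deterministic distribution scaled to make $\frac{r_0^{(2)}}{r_0^2}$ lie just above $1$ will do, for instance switchover times with a tiny but nonzero variance. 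For such a configuration the Strategy~III condition fails (the left side is negative), so it is \emph{not} worth waiting under Strategy~III, while the Strategy~II/IV condition holds merely because the switchover times are non-deterministic.

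The remaining point is Strategy~I. Since Corollary~\ref{cor:conseq} claims it is only worth waiting with Strategy~II and~IV, I must also verify that Strategy~I is not worth waiting in this configuration. Here I would invoke the characterisation of Aurzada et al.\ \cite{aurzada}, which in the symmetric setting gives a condition analogous to (and no weaker than) that of Strategy~III; in fact the running conjecture in the paper is that Strategy~III always beats Strategy~I, so whenever Strategy~III fails to be worth waiting, Strategy~I should fail a fortiori. I would make this precise by quoting the explicit symmetric condition from~\cite{aurzada} and checking that the chosen small relative variance likewise violates it.

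The main obstacle is not any single inequality but rather ensuring that the four conditions can be \emph{simultaneously} satisfied by one admissible choice of parameters, and in particular confirming that Strategy~I genuinely fails in the constructed example. Strategies~II/IV and~III are handled directly by the theorems just proved, so the delicate step is importing the correct Strategy~I condition from~\cite{aurzada} and verifying it excludes the chosen regime; once that is in hand, the corollary follows by simply displaying the explicit parameters and checking the three sign conditions.
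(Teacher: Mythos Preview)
Your approach is essentially the paper's: choose parameters so that the if-and-only-if condition in Theorem~\ref{thm:cond_2} or~\ref{thm:cond_2_sym} holds for Strategy~II/IV but fails for Strategy~III, and your concrete choice via the symmetric case (non-deterministic switchover times with $1<r_0^{(2)}/r_0^2<(1-\rho_1)/(1-2\rho_1)$) is exactly one of the two routes the paper suggests. The paper's own one-line proof does not explicitly treat Strategy~I at all, so your plan to close that gap by invoking the explicit condition from~\cite{aurzada} is more careful than what the paper provides; just be sure to quote the actual theorem there rather than lean on the conjecture that Strategy~III dominates Strategy~I.
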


\section{Proofs}\label{sec:proofs}
\subsection{Proofs of the basic theorems}
We show how to derive Theorems~\ref{thm:main} and~\ref{thm:main_2} which are based on a decomposition principle from~\cite{boxma1987} and on the technique of the proofs of Theorems~$1$ and~$8$ from~\cite{aurzada}.
We mention that the proofs of Theorems~\ref{thm:main} and~\ref{thm:main_2} are quite standard and that the key novelty is the computation of the parameter-dependent quantities in Section~\ref{subsec:deter}.

\begin{proof}[Proof of Theorem~\ref{thm:main}]
First, we recall some important identities: 
The cycle time is the time that the server takes from its arrival at station~$1$ to the next arrival at this station.
The mean cycle time in steady state is denoted by~$\E C$ and is given by
$$
\E C = \frac{r_0+f_0}{1-\rho_0}.
$$
Indeed, this can be argued by looking at the expected time which the server is idle per cycle. 
This expectation equals the sum of all mean switchover and waiting times, i.e., $r_0+f_0$.
On the other hand, we can represent this expected idle time using the total load offered to the system which leads to~$(1-\rho_0)\E C$.

Next, we refer to the workload decomposition in~\cite{boxma1987} and~\cite{boxma2002}.
As a consequence of this decomposition principle, we obtain
\begin{equation}\label{eq:work_decomp}
\E V = \E V^{\rm{M/G/1}} +q \E V^{\rm{switching}} + (1-q) \E V^{\rm{waiting}},
\end{equation}
where $q \coloneqq \P (\text{server is switching} \mid \text{server is idle})$ and $V$ is the workload at a random point in time in steady state.
The workload consists of the sum of all message lengths that are present in the system including the remaining service time of the currently served message.
The quantities $V^{\rm{M/G/1}}$ and $V^{\rm{switching}}$ ($V^{\rm{waiting}}$) refer to the workload in the same polling model without switchover and waiting times, and to the workload given that the server is switching (waiting) at a random point in time, respectively.
Furthermore, we can determine the expected workload differently by
$$
\E V =  \sum_{i=1}^N b_{i} \E[\text{number of messages in queue at station~$i$}] + \sum_{i=1}^N \rho_{i}\frac{b_{i}^{(2)}}{2b_{i}},
$$
where the quotient is the expected residual service time of a currently served message at station~$i$.
Using Little's law, this equation can be rearranged into
\begin{equation}\label{eq:work}
\E V = \rho_0 \bar{D} + \sum_{i=1}^N \rho_{i}\frac{b_{i}^{(2)}}{2b_{i}}.
\end{equation}
Therefore, we can combine~\eqref{eq:work_decomp} and~\eqref{eq:work} in order to obtain a representation of the delay $\bar{D}$.
The quantity $\E V^{\rm{M/G/1}}$ is given in the literature, e.g., in~\cite[p.~206]{asmussen}.

From now on, we focus on the expected workload present while switching~$\E V^{\rm{switching}}$ and waiting~$\E V^{\rm{waiting}}$.
The former does not directly depend on the given wait-and-see strategy so that we can proceed in the same way as in~\cite{aurzada}.
On the other hand, the particular wait-and-see strategy influences the expected workload present while waiting.
It remains to give the general formula for $\E V_i^{\rm{waiting}}$, the expected workload that is present in the system at a random point in time when the server is waiting at station~$i$.
Following the computation in~\cite{aurzada} (see equation $(23)$ there), we obtain
\begin{equation*}
\begin{split}
 \E V_i^{\rm{waiting}} = & \; \sum_{j<i} r_{j} \left(\sum_{l=i+1}^N\rho_{l} + \sum_{l=1}^j \rho_{l}\right) + \sum_{j>i}r_{j} \sum_{l=i+1}^{j} \rho_{l} \\
  & +\sum_{j<i} \rho_{j} \E C \left(\sum_{l=i+1}^N \rho_{l} + \sum_{l=1}^{j-1} \rho_{l}\right) + \sum_{j>i}\rho_{j} \E C \sum_{l=i+1}^{j-1} \rho_{l} \\
  & +\sum_{j<i}f_{j} \left(\sum_{l=i+1}^N \rho_{l} + \sum_{l=1}^{j-1} \rho_{l}\right) + \sum_{j>i}f_{j} \sum_{l=i+1}^{j-1} \rho_{l} \\
  & +(\rho_{0}-\rho_{i}) w_i, 
\end{split}
\end{equation*}
where $w_i$ denotes the expectation of the elapsed time since arriving at station~$i$ at a random point in time while waiting at station~$i$.
Combining all the relevant equations, we get the formula in Theorem~\ref{thm:main} for the delay.
\end{proof}

\begin{proof}[Proof of Theorem~\ref{thm:main_2}]
For Strategy~III, we just obtain the formula from Theorem~\ref{thm:main} by replacing $N$ with $2$.
Now, we deal with Strategy~II and~IV in a polling model with $N=2$ stations.
The only part that differs from the proof of Theorem~\ref{thm:main} is the computation of $\E V_i^{\rm{waiting}}$.
We focus on this quantity for $i=1$.
Since the server is currently waiting at station~$1$, the present workload has not been generated at this station.
Therefore, the workload which is currently present can only consist of messages that have been generated at station~$2$ since exiting that station.
The expectation of the elapsed time is the sum of the mean switchover time from station~$2$ to station~$1$ and the expected backward recurrence time $w_1$.
Keeping in mind that the server is at a random point in time while waiting, we have to use $\tilde{r}_2$ instead of $r_2$ and get
$$
\E V_1^{\rm{waiting}} = \rho_2 (\tilde{r}_2 + w_1).
$$
For $\E V_2^{\rm{waiting}}$, we just have to exchange the roles of $1$ and $2$. 
\end{proof}

\begin{rem}\label{rem:r_cond}
Actually, the conditional mean switchover time~$\tilde{r}_i$ only differs from~$r_i$ for a non-deterministic switchover time for Strategy~II and~IV.
In the case of deterministic switchover times for Strategy~II and~IV, or in the case of a polling model with Strategy~I and~III (waiting occurs if $T_i>0$, independently of the switchover times), we have the equality $\tilde{r}_i=r_i$.
\end{rem}

\begin{rem}
We briefly refer to Theorem~$8$ in~\cite{aurzada} which provides a lower bound for the delay for a class of wait-and-see strategies (including Strategy~\mbox{I--IV}).
The bound given there is correct for a polling model with $N=2$ stations and deterministic switchover times.
In the case of $N=2$ and non-deterministic switchover times, we can replace $r_k$ (in $(35)$ there) by $\E[R_k \mid B_0]$ from~\eqref{eq:r_IV} below.
This is due to the fact that non-deterministic switchover times can have an impact on the existence of a waiting period at the next station (cf.\ Remark~\ref{rem:r_cond}).
In addition for a polling model with $N>2$ stations, we have to bound below the expected sojourn times which the server spends at preceding stations given a random point in time while waiting.
\end{rem}

\subsection{\texorpdfstring{Determination of $\textbf{S}=\left(f_i, w_i, \tilde{r}_i \right)_i$}{Determination of S}}\label{subsec:deter}
The general formulas for the delay in Theorems~\ref{thm:main} and~\ref{thm:main_2} require the specification of $\textbf{S} = \left(f_i, w_i, \tilde{r}_i \right)_i$ according to the wait-and-see strategy.
The real novelty of this work is the determination of these quantities in this section.
Note that we restrict the service times of the messages to exponential distributions with parameter $\mu_i \coloneqq \frac{1}{b_i}$ at station~$i$ for $i=1,\dots,N$ for Strategy~II and~III.
After the following preparations, we discuss the different wait-and-see strategies separately where some parts of Strategy~IV come from~\cite{boxma2002}.
For the sake of simplicity, we deal with Strategy~III before Strategy~II.
                                    
\subsubsection{Preparations}
It is helpful to introduce $c_i$ the expected time per cycle in steady state which the server spends at station~$i$.
This expression is directly related to the mean cycle time $\E C$ and to the expected waiting time $f_i$ at station~$i$ by the equation
$$
c_i= \rho_i \E C + f_i.
$$
Moreover, we define $c_0 \coloneqq \sum_{i=1}^N c_i$ and obtain $ \E C = c_0 + r_0.$

We require a time-dependent state probability (denoted by $P_{j,k}(x)$) to analyse the delay for Strategy~II and~III, and we require the distribution of the length of a busy period to analyse the delay for Strategy~II and~IV.

\paragraph{The probability $P_{j,k}(x)$.}
According to~\cite[p.\ 55]{kleinrock}, we denote by $P_{j,k}(x)$ the probability that the queue length of an M/M/1 queue (in the sense of the population size of a birth-death process) with arrival rate~$\lambda_i$ and service rate~$\mu_i$ is $k$ at time $x$ given that the queue length is $j$ at time zero.
We introduce the abbreviation $a_i \coloneqq 2\mu_i \sqrt{\rho_i}$, where the traffic load $\rho_i$ equals $\frac{\lambda_i}{\mu_i}$, and the modified Bessel functions $I_k(x)$ of the first kind of order~$k$, which can be defined by
$$
I_k(x) \coloneqq \sum_{m=0}^{\infty} \frac{\left(\frac{x}{2}\right)^{k+2m}}{(k+m)!m!} \quad \text{for } k \in \mathbb{N}_0
$$
and $I_{-k}(x) \coloneqq I_k(x)$ for $k \in \mathbb{N}$. 
Finally, we have
\begin{equation}\label{eq:P_jk}
\begin{split}
 P_{j,k}(x) = e^{-(\lambda_i + \mu_i)x} \Bigg[ \rho_i^{\frac{k-j}{2}} I_{k-j}(a_ix) & + \rho_i^{\frac{k-j-1}{2}} I_{k+j+1}(a_ix) \\
 & + (1-\rho_i) \rho_i^{k} \sum_{l=k+j+2}^{\infty} \rho_i^{-\frac{l}{2}} I_l(a_ix) \Bigg] \\
\end{split} 
\end{equation}
due to~\cite[p.\ 77]{kleinrock}.
We emphasise that the probability $P_{j,k}(x)$ differs depending on~$i$ but we omit such an additional index because it arises out of the context.

\paragraph{The density $g_i$ of a busy period.}
The density of the length of a busy period at station~$i$ is denoted by~$g_i$ and the $n$-fold convolution of~$g_i$ with itself by~$g_i^{(*n)}$.
We get
$$
g_i(x)= \sum_{n=1}^\infty e^{-\lambda_i x} \frac{(\lambda_i x)^{n-1}}{n!} b_i^{(*n)}(x) \quad \text{for } x \geq 0
$$
from~\cite[p.\ 226]{kleinrock}.
Note that with abuse of notation $g_i^{(*0)}$ represents the Dirac delta function according to the property that the length of $0$ busy periods is zero.
The density $b_i^{(*n)}$ is the $n$-fold convolution of the service time with itself.
For exponentially distributed service times, we obtain the density
$$
b_i^{(*n)}(x) = \frac{\mu_i^n x^{n-1}}{(n-1)!} e^{-\mu_i x} \quad \text{for } x \geq 0,
$$
of the Erlang($n,\mu_i$) distribution which can also be identified as a gamma distribution.
In this particular case, a further representation of $g_i$ using the modified Bessel function of the first kind of order one is given in~\cite[p.\ 215]{kleinrock}.

\subsubsection{Strategy III}
We denote by $q_i(x)$ the expected number of messages (including the possibly currently served message) present at station~$i$ after time~$x$ given that there is no message present at time zero.
With the probability $P_{0,k}(x)$ which we have just introduced, we get
$$
q_i(x) = \sum_{k=0}^\infty k P_{0,k}(x).
$$
Since we only require the expected number of messages at time $T_i$, we define the short version $q_i \coloneqq q_i(T_i)$.

\paragraph{The expected sojourn time $c_i$.} 
For each station we get the equation
\begin{equation}\label{eq:c_i_III}
c_i= \lambda_i (r_0 + c_0 - c_i) \frac{b_i}{1-\rho_i} + T_i + q_i \frac{b_i}{1-\rho_i} 
\end{equation}
which can be seen as follows:
First of all, the time which the server spends at station~$i$ depends on the elapsed time since exiting this station in the preceding cycle up to the current arrival at this station.
This expected intervisit time of the server at station~$i$ is
$$\E C - c_i = r_0 + c_0 - c_i$$
and the quotient $\frac{b_i}{1-\rho_i}$ is the expected length of a busy period (which is caused by one arriving message).
In order to obtain this latter quantity, we refer to the short calculation using Laplace transforms in~\cite[pp.\ 211--213]{kleinrock}.
Together with the arrival rate $\lambda_i$, we can compute the expected length of the first busy period (generated by the waiting messages) at station~$i$ and get
\begin{equation}\label{eq:first_busy_period}
 \lambda_i (r_0 + c_0 - c_i) \frac{b_i}{1-\rho_i}.
\end{equation}
After the first busy period, the server has to spend the time $T_i$ at this station (which can consist of several busy and waiting periods).
Then, the server exits the station if the queue is empty at time $T_i$.
Alternatively, if there are messages present at time $T_i$, the server continues serving messages until the queue is empty.
This additional time depends on the expected number $ q_i $ of present messages and equals $q_i \frac{b_i}{1-\rho_i}$ in expectation.

Using~\eqref{eq:c_i_III}, we can set up linear system of equations with variables $c_i$.
For instance in the case of two stations, we obtain
$$
c_1 = \frac{r_0 \rho_1 + (1-\rho_1)(1-\rho_2) T_1 + \rho_1 (1-\rho_2) T_2 + (1-\rho_2) q_1 b_1 + \rho_1 q_2 b_2}{1-\rho_0},
$$
\begin{equation}\label{eq:c2_III}
c_2 = \frac{r_0 \rho_2 + (1-\rho_1)(1-\rho_2) T_2 + \rho_2 (1-\rho_1) T_1 + (1-\rho_1) q_2 b_2 + \rho_2 q_1 b_1}{1-\rho_0}.
\end{equation}

\paragraph{The expected backward recurrence time $w_i$.} 
The expectation $w_i$ is the sum of two terms:
On the one hand, there is the expected length of the first busy period at station~$i$ (see term~\eqref{eq:first_busy_period}).
The second summand is the expectation of the elapsed time since becoming idle at station~$i$ for the first time at a random point in time while waiting at this station.
Therefore, we get
\begin{equation}\label{eq:wi_III}
w_i = \lambda_i (r_0 + c_0 - c_i) \frac{b_i}{1-\rho_i} + \frac{\int_0^{T_i} x P_{0,0}(x) \diff x}{\int_0^{T_i} P_{0,0}(x) \diff x},
\end{equation}
where a random point in time while waiting has the density
$$
\frac{P_{0,0}(x)}{\int_0^{T_i} P_{0,0}(y) \diff y} \quad \text{for } x \in [0,T_i].
$$

\paragraph{The conditional mean switchover time $\tilde{r}_i$.}
We have $\tilde{r}_i=r_i$ because there is a waiting period at station~$i$ every cycle for~$T_i>0$ due to the definition of Strategy~III.

\subsubsection{Strategy II}
We focus on the steady-state probabilities~$\pi_n^{(i)}$ for all $n \in \mathbb{N}_0$ that the server finds $n$ messages waiting upon arrival at station~$i$.
We consider deterministic switchover times in this paragraph first.
The following system of equations describes the relation of consecutive visits at the stations.
The probability of finding $n$ messages upon arrival at station~$1$ depends on the intervisit time of the server, i.e., the time since exiting this station in the preceding cycle.
The intervisit time can be divided into the sum of the switchover times and the time which the server spends at station~$2$ between two consecutive visits at station~$1$.
This latter time can be split in two parts:
First, the server stays the minimum sojourn time $T_2$.
The second part consists of the time which the server takes to serve the possibly remaining messages.
This part depends on the number of messages present at time~$T_2$.
Given that the server finds $k$ messages upon arrival at station~$2$, there are $l$~messages present with probability $P_{k,l}(T_{2})$ after spending the minimum sojourn time. 
Then, the length of the second part has the density~$g_2^{(*l)}$ which denotes the density of the sum of $l$ independent busy periods at station~$2$.
We recall that the arrival process at station~$1$ is a Poisson process with arrival rate $\lambda_1$.
The probability of finding $n$ messages at station~$1$ is given by a Poisson distribution with parameter $\lambda_1 t$ if the intervisit time of the server equals $t$.
Therefore, we can conclude the equation
$$
\pi_n^{(1)} = \sum_{k=0}^\infty \pi_k^{(2)} \sum_{l=0}^\infty P_{k,l}(T_{2}) \int_0^{\infty} e^{-\lambda_1 (r_0+T_{2}+x)} \frac{(\lambda_1 (r_0+T_{2}+x))^n}{n!} g_2^{(*l)}(x) \diff x 
$$
for deterministic switchover times. Thereby, we get the coefficients for an infinite linear system of equations 
$\pi^{(1)} = A \pi^{(2)}.$
In the same manner as above, there is a system
$\pi^{(2)} = B \pi^{(1)}.$

If the switchover times are non-deterministic, we can not proceed in such a straightforward way.
Instead, we focus on the queue length distribution at server departure instants.
Note that the queue at departure instants is always empty at the current station.
We denote by $\nu_n^{(i)}$ the steady-state probabilities that there are $n$ messages waiting at the other station upon exit from station~$i$.
Now, we give an explanation for the equation
\begin{equation}\label{eq:nu_n}
\begin{split}
 \nu_n^{(1)} = \sum_{k=0}^\infty & \, \nu_k^{(2)} \sum_{m=0}^\infty \sum_{j=0}^n \Bigg[ \int_0^{\infty} e^{-(\lambda_1+\lambda_2) x} \frac{(\lambda_1 x)^{m}}{m!} \frac{(\lambda_2 x)^{j}}{j!} \diff F_{R_2}(x) \\ 
 & \sum_{l=0}^\infty P_{k+m,l}(T_{1}) \int_0^{\infty} e^{-\lambda_2 (T_{1}+x)} \frac{(\lambda_2 (T_{1}+x))^{n-j}}{(n-j)!} g_1^{(*l)}(x) \diff x \Bigg]
\end{split}
\end{equation}
which consists of similar terms as above.
Given that there are $k$ messages waiting at station~$1$ upon exit from station~$2$, we have $m$ message arrivals at station~$1$ and $j$ message arrivals at station~$2$ while switching to station~$1$.
Therefore, there are $k+m$ messages waiting at station~$1$ upon arrival at this station.
In order to obtain a queue length of $n$ messages at station~$2$ upon exit from station~$1$, a total of $n-j$ messages have to arrive at station~$2$ during this stay.
Then, equation~\eqref{eq:nu_n} follows by considering all possible variations of indices.

From~\eqref{eq:nu_n} and the corresponding observation, we get two systems of equations $\nu^{(1)} = \tilde{A} \nu^{(2)}$ and $\nu^{(2)} = \tilde{B} \nu^{(1)}$ where the coefficients of $\tilde{A}$ are given in~\eqref{eq:nu_n}.
Finally, we are able to determine $\pi_n^{(i)}$ by
\begin{equation}\label{eq:pi_n}
\pi_n^{(1)} = \sum_{k=0}^n \nu_k^{(2)} \int_0^{\infty} e^{-\lambda_1 x} \frac{(\lambda_1 x)^{n-k}}{(n-k)!} \diff F_{R_2}(x).
\end{equation}
For $\pi^{(2)} $, the roles of $1$ and $2$ have to be exchanged.

\paragraph{The expected sojourn time $c_i$.} 
Using the solutions $\pi^{(i)}$, we obtain the expected sojourn time
$$
c_i = T_i + \sum_{k=0}^\infty \pi_k^{(i)} \sum_{l=0}^\infty l P_{k,l}(T_{i}) \frac{b_i}{1-\rho_i}
$$
which the server spends at station~$i$ per cycle.
Here, the series
$$
\sum_{l=0}^\infty l P_{k,l}(T_{i}) 
$$
is the expectation of the number of messages present at station~$i$ after spending the minimum sojourn time $T_i$ given that there are $k$ messages present upon arrival of the server.
The quotient $\frac{b_i}{1-\rho_i}$ is the expected length of a busy period.

\paragraph{The expected backward recurrence time $w_i$.}
In order to determine~$w_i$, we recall the condition that a point in time while the server is waiting is randomly chosen.
We distinguish how many messages are waiting upon arrival of the server at the station.
Therefore, we obtain
$$
w_i = \sum_{k=0}^\infty p_k^{(i)} \frac{\int_0^{T_i} x P_{k,0}(x) \diff x}{\int_0^{T_i} P_{k,0}(x) \diff x},
$$
where $p_k^{(i)}$ denotes the probability of choosing a waiting period during a stay with $k$ messages waiting upon arrival of the server.
Similar to Strategy~III above, the quotient is the expectation of the elapsed time since arriving at station~$i$ at a random point in time while waiting at station~$i$ given that there are $k$ messages waiting upon arrival of the server.

It remains to determine the coefficients $p_k^{(i)}$.
The basic observation is that $p_k^{(i)}$ is proportional to the probability~$\pi_k^{(i)}$ that the server finds $k$ messages waiting upon arrival at station~$i$ and to the expected length of the total waiting time during the stay at such a station, i.e., $\int_0^{T_i} P_{k,0}(x) \diff x$.
Hence, the probability $p_k^{(i)}$ is given by
\begin{equation}\label{eq:p_k}
p_k^{(i)} = \frac{\pi_k^{(i)} \int_0^{T_i} P_{k,0}(x) \diff x}{\sum_{l=0}^\infty \pi_l^{(i)} \int_0^{T_i} P_{l,0}(x) \diff x}.
\end{equation}

\paragraph{The conditional mean switchover time $\tilde{r}_i$.}
If the switchover time from station~$i$ to the next station is deterministic, we get $\tilde{r}_i=r_i$ (cf.\ Remark~\ref{rem:r_cond}).
Otherwise, the conditional mean switchover time $\tilde{r}_i$ from station~$i$ to the next station, given a random point in time while waiting at this next station, can be determined as follows.
We restrict the computation to $i=2$ for the sake of clarity.
First, we introduce the events
\begin{equation*}
\begin{split}
 A_l \coloneqq & \; \{\text{there are $l$ messages waiting at station~$1$ upon exit from station~$2$} \},\\
 B_j \coloneqq & \; \{\text{there are $j$ messages arriving at~$1$ while switching from~$2$ to~$1$} \},\\
 C_k \coloneqq & \; \{\text{there are $k$ messages waiting at station~$1$ upon arrival} \} 
\end{split}
\end{equation*}
for all $j,k,l \in \mathbb{N}_0$.
We distinguish how many messages are waiting upon arrival of the server at station~$1$ just like above.
We get
\begin{equation}\label{eq:r_II}
\tilde{r}_2 = \sum_{k=0}^\infty p_k^{(1)} \, \E[R_2 \mid C_k], 
\end{equation}
where $p_k^{(i)}$ is given by~\eqref{eq:p_k}.
Now, we are left with the specification of the quantity $\E[R_2 \mid C_k]$.
We make use of
$$
C_k = \bigcup_{j=0}^{k} A_{k-j} \cap B_j
$$
and obtain
$$
\E[R_2 \mid C_k] = \sum_{j=0}^k \frac{ \P(A_{k-j} \cap B_j)}{\P(C_k)} \, \E[R_2 \mid A_{k-j} \cap B_j].
$$
Due to the independence of the events $A_{k-j}$ and $B_j$, and the fact that $A_{k-j}$ does not influence the switchover time $R_2$, we get
\begin{equation}\label{eq:r_cond_Ck}
\E[R_2 \mid C_k] = \sum_{j=0}^k \frac{\P(A_{k-j}) \P(B_j)}{\P(C_k)} \, \E[R_2 \mid B_j].
\end{equation}
It remains to determine these quantities.
We can represent event $B_j$ as
\begin{equation}\label{eq:Bj}
B_j= \left\{\sum_{l=1}^j e_l \leq R < \sum_{l=1}^{j+1} e_l \right\},
\end{equation}
where $(e_l)_l$ is a sequence of independent and exponentially distributed random variables with parameter~$1$ which are independent of $R \coloneqq \lambda_1 R_2$ as well.
We get
$$
\lambda_1 \E\left[R_2 \mid B_j \right] = \frac{\E\left[R \, \mathbbm{1}_{B_j}\right]}{\E\left[\mathbbm{1}_{B_j} \right]} = \frac{\E_R\left[R \, \E_{(e_l)_l} \left[\mathbbm{1}_{B_j} \right]\right]}{\E_R \left[\E_{(e_l)_l} \left[\mathbbm{1}_{B_j} \right]\right]}.
$$
We use the property that the sum of independent and identically exponentially distributed random variables is Erlang distributed and thus compute
$$
\E_{(e_l)_l} \left[\mathbbm{1}_{B_j} \right] = \frac{R^j}{j!} e^{-R}.
$$
Therefore, we obtain
\begin{equation}\label{eq:r_cond_Bj}
\E[R_2 \mid B_j] = \frac{\E_R \left[R^{j+1} e^{-R} \right]}{\lambda_1 \E_R \left[R^j e^{-R} \right]} = \frac{\int_0^{\infty} x e^{-\lambda_1 x} \frac{(\lambda_1 x)^j}{j!} \diff F_{R_2}(x)}{\int_0^{\infty} e^{-\lambda_1 x} \frac{(\lambda_1 x)^j}{j!} \diff F_{R_2}(x)}
\end{equation}
and 
$$
\P(B_j) = \E\left[\mathbbm{1}_{B_j} \right] = \int_0^{\infty} e^{-\lambda_1 x} \frac{(\lambda_1 x)^j}{j!} \diff F_{R_2}(x).
$$
Finally, we have
$$
\P(C_k) = \sum_{j=0}^k \P(A_{k-j}) \P(B_j)
$$
due to the independence and $\P(A_{k-j}) = \nu_{k-j}^{(2)}$.

\subsubsection{Strategy IV}
As above, $\pi_n^{(i)}$ is the steady-state probability that the server finds $n$ messages waiting upon arrival at station~$i$.
The method we use to give the characterising system coincides with the method for Strategy~II.
The probability~$\pi_n^{(1)}$ depends on the intervisit time of the server which consists of the switchover times and the time that the server spends at station~$2$ between two consecutive visits at station~$1$.

We have to distinguish whether there is no message or at least one message waiting at station~$2$ because it influences the activation of the timer.
In the first case, either a new message arrives before the timer expires and a busy period starts, or there is no message arrival and the server waits the whole time $T_2$.
For deterministic switchover times, we obtain
\begin{equation*}
\begin{split}
 \pi_n^{(1)} = & \; \pi_0^{(2)} \left[\int_0^{T_2} \int_0^{\infty} e^{-\lambda_1(r_0+x+y)} \frac{(\lambda_1(r_0+x+y))^n}{n!} g_2(x) \diff x \, \lambda_2 e^{-\lambda_2 y} \diff y \right. \\
 & \qquad \qquad \qquad \qquad \qquad \qquad \qquad + \left. e^{-\lambda_1(r_0+T_2)} \frac{(\lambda_1(r_0+T_2))^n}{n!} e^{-\lambda_2 T_2} \right] \\
 & + \sum_{k=1}^\infty \pi_k^{(2)} \int_0^{\infty} e^{-\lambda_1 (r_0+x)} \frac{(\lambda_1 (r_0+x))^n}{n!} g_2^{(*k)}(x) \diff x.
\end{split}
\end{equation*}
Once again, we get systems of equations $\pi^{(1)} = A \pi^{(2)}$ and $\pi^{(2)} = B \pi^{(1)}$.
Note that we are only interested in $\pi_0^{(i)}$ in the end.

In the case of non-deterministic switchover times, we focus on the steady-state probabilities $\nu_n^{(i)}$ that there are $n$ messages waiting at the other station upon exit from station~$i$.
We obtain
\begin{equation*}
\begin{split}
 \nu_n^{(1)} = & \; \nu_0^{(2)} \sum_{j=0}^n \left[\int_0^{\infty} e^{-(\lambda_1+\lambda_2) x} \frac{(\lambda_1 x)^{0}}{0!} \frac{(\lambda_2 x)^{j}}{j!} \diff F_{R_2}(x) \right. \\
 & \qquad \qquad \left( \int_0^{T_1} \int_0^{\infty} e^{-\lambda_2(x+y)}  \frac{(\lambda_2(x+y))^{n-j}}{(n-j)!} g_1(x)  \diff x \, \lambda_1 e^{-\lambda_1 y} \diff y \right. \\
 &  \qquad \qquad \qquad \qquad \qquad \qquad \qquad \qquad \quad + \left. \left. e^{-\lambda_2 T_1} \frac{(\lambda_2 T_1)^{n-j}}{(n-j)!} e^{-\lambda_1 T_1} \right) \right] \\
 & +\sum_{k=0}^\infty \nu_k^{(2)} \sum_{\substack{m=0 \\ m+k \neq 0}}^\infty \sum_{j=0}^n \left[\int_0^{\infty} e^{-(\lambda_1+\lambda_2) x} \frac{(\lambda_1 x)^{m}}{m!} \frac{(\lambda_2 x)^{j}}{j!} \diff F_{R_2}(x) \right. \\ 
 & \qquad \qquad \qquad \qquad \qquad \qquad \left. \int_0^{\infty} e^{-\lambda_2 x} \frac{(\lambda_2 x)^{n-j}}{(n-j)!} g_1^{(*(k+m))}(x) \diff x \right]
\end{split}
\end{equation*}
and get two systems of equations $\nu^{(1)} = \tilde{A} \nu^{(2)}$ and $\nu^{(2)} = \tilde{B} \nu^{(1)}.$
Finally, we can compute $\pi_n^{(i)}$ as mentioned in~\eqref{eq:pi_n} for Strategy~II.

\paragraph{The expected waiting time $f_i$.}
Let $E_i$ be an exponentially distributed random variable with intensity $\lambda_i$ which represents the interarrival time of messages at station~$i$.
We denote by $\min(E_i,T_i)$ the random length of a waiting period at station~$i$.
The timer at station~$i$ is activated if and only if the server finds this station empty upon arrival.
Therefore, we can conclude
$$
f_i = \pi_0^{(i)} \E\left[\min(E_i,T_i) \right] = \frac{\pi_0^{(i)}}{\lambda_i} \left(1-e^{-\lambda_i T_i} \right)
$$
for the expected waiting time at station~$i$ per cycle in steady state.

\paragraph{The expected backward recurrence time $w_i$.}
The quantity $w_i$ equals the expected residual time of a waiting period and is given by
$$
w_i = \frac{ \E\left[\min(E_i,T_i)^2 \right]}{2 \E\left[\min(E_i,T_i) \right]} = \frac{1}{\lambda_i} - \frac{T_i}{e^{\lambda_i T_i}-1}.
$$

\paragraph{The conditional mean switchover time $\tilde{r}_i$.}
If the switchover time is deterministic, we just have $\tilde{r}_i=r_i$ (cf.\ Remark~\ref{rem:r_cond}).
Now, we focus on a non-deterministic switchover time:
Similar but easier than for Strategy~II, the quantity $\tilde{r}_2$ is just the mean switchover time given that there is no arrival at station~$1$ while switching to this station.
We get
\begin{equation}\label{eq:r_IV}
\tilde{r}_2 = \E[R_2 \mid B_0] = \frac{\int_0^{\infty} x e^{-\lambda_1 x} \diff F_{R_2}(x)}{\int_0^{\infty} e^{-\lambda_1 x} \diff F_{R_2}(x)}
\end{equation}
and we can represent $\tilde{r}_1$ in an analogous manner.

\subsection{Proofs of the `worth-waiting' results}
\subsubsection{Preparations}
First, we state two facts which we use later to prove that it is worth waiting with Strategy~II if it is worth waiting with Strategy~IV.
Lemma~\ref{lem:r_cond} concerns an estimate for the mean switchover time given a certain number of message arrivals while switching.

\begin{lem}\label{lem:r_cond}
There is a positive constant $\alpha$ such that
$$
\E[R_2 \mid B_j] \leq \alpha \left(j^2+1 \right)
$$
for all $j \in \mathbb{N}$ with the notation from~\eqref{eq:Bj}.
\end{lem}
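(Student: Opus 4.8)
The plan is to reduce the claim to a statement about moment ratios of a finite measure and then exploit the log-convexity of moments. Starting from the closed form in~\eqref{eq:r_cond_Bj}, the factor $\lambda_1^j/j!$ cancels between numerator and denominator, so that
\[
\E[R_2 \mid B_j] = \frac{m_{j+1}}{m_j}, \qquad m_j \coloneqq \E\!\left[R_2^{\,j}\, e^{-\lambda_1 R_2}\right] = \int_0^\infty x^j e^{-\lambda_1 x}\diff F_{R_2}(x).
\]
Thus it suffices to show that the ratio $s_j \coloneqq m_{j+1}/m_j$ grows at most quadratically in $j$. Note that every $m_j$ is finite (the weight $x^j e^{-\lambda_1 x}$ is bounded) and strictly positive: since $R_2$ is non-deterministic it is not almost surely zero, so there are $0<a<b<\infty$ with $\delta\coloneqq\P(R_2\in[a,b])>0$, whence $m_j\ge \delta\, a^j e^{-\lambda_1 b}>0$.

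The three ingredients I would assemble are the following. First, $j\mapsto m_j$ is log-convex: by Cauchy--Schwarz applied to the factorisation $x^j e^{-\lambda_1 x}=\big(x^{(j-1)/2}e^{-\lambda_1 x/2}\big)\big(x^{(j+1)/2}e^{-\lambda_1 x/2}\big)$ one gets $m_j^2\le m_{j-1}m_{j+1}$, so the ratios $s_j$ are non-decreasing in $j$. Second, an elementary supremum bound controls the even moments: factoring $e^{-\lambda_1 R_2}=e^{-\lambda_1 R_2/2}e^{-\lambda_1 R_2/2}$ and maximising $x^{2j}e^{-\lambda_1 x/2}$ at $x=4j/\lambda_1$ gives
\[
m_{2j}\le \Big(\tfrac{4j}{\lambda_1 e}\Big)^{2j}\,\E\!\left[e^{-\lambda_1 R_2/2}\right]\le\Big(\tfrac{4j}{\lambda_1 e}\Big)^{2j}.
\]
Third, the crude lower bound $m_j\ge \delta\, a^j e^{-\lambda_1 b}$ recorded above.

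These combine as follows. Since the $s_k$ are non-decreasing, telescoping gives
\[
s_j^{\,j}\le \prod_{k=j}^{2j-1}s_k=\frac{m_{2j}}{m_j}\le \frac{\big(4j/(\lambda_1 e)\big)^{2j}}{\delta\, a^j e^{-\lambda_1 b}}.
\]
Taking $j$-th roots turns the $2j$-th power into the square that produces the quadratic growth:
\[
s_j\le \frac{16\,j^2}{\lambda_1^2 e^2\, a}\Big(\frac{e^{\lambda_1 b}}{\delta}\Big)^{1/j}\le \frac{16\,e^{\lambda_1 b}}{\lambda_1^2 e^2\, a\,\delta}\,j^2,
\]
where the last step uses $e^{\lambda_1 b}/\delta\ge 1$, so that its $j$-th root is bounded by itself. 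Setting $\alpha$ equal to the constant on the right then yields $\E[R_2\mid B_j]=s_j\le \alpha j^2\le \alpha(j^2+1)$ for all $j\in\mathbb N$, as desired.

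The step I expect to be the crux is the lower bound on $m_j$: the monotone-ratio argument only delivers $s_j^{\,j}\le m_{2j}/m_j$, and without a positive lower bound on $m_j$ the factor $(1/m_j)^{1/j}$ could fail to stay bounded in $j$. This is exactly where the non-determinism of $R_2$ enters, since it guarantees mass on a compact interval bounded away from the origin; it is also consistent with Remark~\ref{rem:r_cond}, because in the deterministic case one simply has $\tilde r_2=r_2$ and the events $B_j$ play no role. One could instead start from the probabilistic bound $\E[R_2\mid B_j]\le \E[\tau_{j+1}\mid B_j]$, where $\tau_{j+1}=\tfrac1{\lambda_1}\sum_{l=1}^{j+1}e_l$ is the $(j+1)$-th arrival epoch and one uses $R_2<\tau_{j+1}$ on $B_j$ by~\eqref{eq:Bj}, but converting this into an explicit polynomial bound appears to require comparable effort, so I would favour the moment-ratio route above.
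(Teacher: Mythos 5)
Your proof is correct and takes essentially the same route as the paper: both reduce $\E[R_2 \mid B_j]$ to the ratio $m_{j+1}/m_j$ of consecutive moments of the exponentially tilted measure $e^{-\lambda_1 x}\diff F_{R_2}(x)$ and then exploit that this tilted law has light tails (the paper phrases this as the tilted variable having a finite exponential moment). The paper's sketch leaves the ensuing ``elementary calculations'' unspecified, and your log-convexity/telescoping argument combined with the supremum bound on $m_{2j}$ and the lower bound $m_j \geq \delta a^j e^{-\lambda_1 b}$ is a valid, fully explicit way of completing exactly that step.
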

\begin{proof}[Sketch of proof]
We recall
$$
\E[R_2 \mid B_j] = \frac{\E_R \left[R^{j+1} e^{-R} \right]}{\lambda_1 \E_R \left[R^j e^{-R} \right]}
$$
for $R \coloneqq \lambda_1 R_2$ from~\eqref{eq:r_cond_Bj} and we introduce the random variable $X$ by
$$
\E\left[f(X) \right] \coloneqq \frac{\E_R \left[f(R) e^{-R} \right]}{\E_R \left[e^{-R} \right]}, \quad f \in C_b.
$$
Then, $X$ has some finite exponential moment and one can show by elementary calculations that there is an $\alpha > 0$ such that
$$
\frac{\E\left[X^{j+1} \right]}{\E\left[X^j \right]} \leq \lambda_1 \alpha \left(j^2+1 \right)
$$
for all $j \in \mathbb{N} $. This finishes the proof.
\end{proof}

The next Lemma~\ref{lem:pi_0} captures the fact that if there may be an additional waiting time due to a larger wait-and-see parameter $\tilde{T}_1 \geq T_1$, rather more messages arrive per cycle.
Therefore, the probability of finding an empty queue upon arrival at station~$1$ becomes smaller.

\begin{lem}\label{lem:pi_0}
Consider a polling model with $N=2$ stations, Strategy~II and $T_2=0$.
Given a $\bar{T}_1>0$, we have
$$
\pi_{\rm{inf}} \coloneqq \inf_{T_1 \in [0,\bar{T}_1]} \pi_0^{(1)}(T_1) > 0.
$$
\end{lem}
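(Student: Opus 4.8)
The plan is to express $\pi_0^{(1)}(T_1)$ as a Laplace-type functional of the intervisit time at station~$1$ and then to bound that functional below uniformly in $T_1$ by a convexity argument combined with a uniform bound on the mean cycle time.

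First I would record the probabilistic meaning of $\pi_0^{(1)}$. Let $I_1$ denote the intervisit time at station~$1$, i.e.\ the time between the server leaving station~$1$ and returning to it, sampled once per cycle. Since messages arrive at station~$1$ according to a Poisson process of rate~$\lambda_1$ and none are served while the server is away, the number found upon arrival is, conditionally on $I_1$, Poisson with parameter $\lambda_1 I_1$. Hence
$$
\pi_0^{(1)}(T_1) = \E\!\left[e^{-\lambda_1 I_1}\right],
$$
where the expectation is taken under the cycle-stationary law of $I_1$ (each cycle contributes exactly one arrival of the server at station~$1$, so no length-biasing arises).

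Then by Jensen's inequality applied to the convex map $x \mapsto e^{-\lambda_1 x}$ we obtain $\pi_0^{(1)}(T_1) \geq e^{-\lambda_1 \E[I_1]}$, so it suffices to bound $\E[I_1]$ above uniformly over $T_1 \in [0,\bar{T}_1]$. Since the intervisit time is the part of the cycle not spent at station~$1$, we have $\E[I_1] = \E C - c_1 \leq \E C$. Recalling the cycle-time identity $\E C = (r_0 + f_0)/(1-\rho_0)$ (whose derivation is strategy-independent), together with $f_0 = f_1$ for $T_2 = 0$ (there is no waiting at station~$2$, so $f_2 = 0$) and the observation that under Strategy~II all waiting at station~$1$ occurs within the first $T_1$ units of the sojourn, so that $f_1 \leq T_1 \leq \bar{T}_1$, I would conclude $\E[I_1] \leq (r_0 + \bar{T}_1)/(1-\rho_0)$. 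Combining this with the Jensen bound gives
$$
\pi_{\rm{inf}} = \inf_{T_1 \in [0,\bar{T}_1]} \pi_0^{(1)}(T_1) \geq \exp\!\left(-\lambda_1\,\frac{r_0 + \bar{T}_1}{1-\rho_0}\right) > 0,
$$
which is the assertion, with an explicit and manifestly uniform constant.

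The main obstacle is not a hard estimate but the clean justification of the representation $\pi_0^{(1)} = \E[e^{-\lambda_1 I_1}]$ and of the bound $f_1 \leq T_1$. For the latter one has to argue carefully from the definition of Strategy~II that no waiting period at station~$1$ can begin after the timer $T_1$ has elapsed: once the minimum sojourn time is over, the server serves exhaustively and leaves the instant the queue empties, so the total idle time collected at station~$1$ per cycle never exceeds $T_1$. Everything else—the conditional-Poisson structure, Jensen's inequality, and the cycle-time identity—is routine.
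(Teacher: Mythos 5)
Your proposal is correct, but it proves the lemma by a genuinely different route than the paper. The paper's argument is structural: it couples two polling processes with parameters $T_1 \leq \tilde{T}_1$ so that the queue length at station~$1$ upon exit from station~$2$ is pathwise larger for the larger parameter, invokes the ergodic theorem to transfer this to the steady-state probabilities $\nu_0^{(2)}$, and then uses~\eqref{eq:pi_n} to conclude that $T_1 \mapsto \pi_0^{(1)}(T_1)$ is non-increasing, so that $\pi_{\rm{inf}} = \pi_0^{(1)}(\bar{T}_1)$. Your argument instead writes $\pi_0^{(1)}(T_1) = \E\left[e^{-\lambda_1 I_1}\right]$ (valid because, with the server away, the station-$1$ queue empties only at departure epochs and the intervisit time $I_1$ is determined by station-$2$ traffic, service and switchover randomness, hence independent of the station-$1$ arrivals in that window), applies Jensen's inequality, and bounds $\E[I_1] \leq \E C = (r_0+f_0)/(1-\rho_0)$ with $f_0 = f_1 \leq T_1 \leq \bar{T}_1$, the last inequality being the same observation the paper itself uses later ($w_1 \leq T_1$, since waiting can only occur during the minimum sojourn time). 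What each approach buys: yours is more elementary — no coupling construction, no ergodic theorem — and quantitative, producing the explicit uniform constant $\exp\left(-\lambda_1 (r_0+\bar{T}_1)/(1-\rho_0)\right)$; it also supplies directly the positivity of $\pi_0^{(1)}(\bar{T}_1)$, which the paper's sketch leaves implicit after identifying the infimum with the endpoint value. The paper's approach yields strictly more information, namely monotonicity of $\pi_0^{(1)}$ in $T_1$, which identifies where the infimum is attained. Your argument also extends immediately to the symmetric setting $T_1=T_2>0$ needed in the proof of Theorem~\ref{thm:cond_2_sym} (bound $f_0 \leq T_1+T_2$ there), whereas the paper must separately extend its coupling.
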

\begin{proof}[Sketch of proof]
We can construct an appropriate coupling of two processes representing the polling models with wait-and-see parameter~$T_1$ and~$\tilde{T}_1$ for $0 \leq T_1 \leq \tilde{T}_1$.
Due to the construction, the queue length at station~$1$ upon exit from station~$2$ is always larger for the process with~$\tilde{T}_1$ instead of~$T_1$.
Combining this observation and the ergodic theorem for Markov chains, we obtain
$$
\nu_0^{(2)}(T_1) \geq \nu_0^{(2)} (\tilde{T}_1).
$$
This inequality is equivalent to
$$
\pi_0^{(1)}(T_1)\geq \pi_0^{(1)} (\tilde{T}_1)
$$
due to~\eqref{eq:pi_n}.
Then, we get $\pi_{\rm{inf}} = \pi_0^{(1)} (\bar{T}_1)$.
\end{proof}

We make use of Theorem~\ref{thm:main_2} to prove whether it is worth waiting.
For the purpose of comparison, we recall the formula
$$
\bar{D}^{\rm{exh}} = \frac{\sum_{i=1}^2 \lambda_i b_i^{(2)}}{2(1-\rho_0)} + \frac{r_0 \rho_1 \rho_2}{\rho_0(1-\rho_0)} + \frac{r_0^{(2)}}{2 r_0}
$$
for the mean average queueing delay of a message in a polling model with the exhaustive strategy from~\eqref{eq:D} by setting $f_1=f_2=0$.
Thus, we can rearrange~\eqref{eq:D} into $\bar{D} = \bar{D}^{\rm{exh}} + \Delta \bar{D}$ with
\begin{equation}\label{eq:delta_D}
\begin{split}
 \Delta \bar{D} \coloneqq & -\frac{r_0^{(2)}}{2 r_0} + \frac{r_0^{(2)}}{2 (r_0+f_0)} \\
 & + \frac{\rho_2 f_1}{\rho_0(r_0+f_0)} (r_1 + \tilde{r}_2 + w_1) \\
 & + \frac{\rho_1 f_2}{\rho_0(r_0+f_0)} (\tilde{r}_1 + r_2 + w_2).  
\end{split}
\end{equation}

\subsubsection{\texorpdfstring{Proof of Theorem~\ref{thm:cond_2}}{Proof of Theorem 3}}
Due to $T_2=0$, we have $f_2=0$ and the last line in~\eqref{eq:delta_D} vanishes.
It is worth waiting at station~$1$ if and only if there is a positive parameter of the wait-and-see strategy such that $\Delta \bar{D} <0$.
Since the expected waiting time at station~$1$ equals the total expected waiting time per cycle ($f_1=f_0$), we rearrange inequality $\Delta \bar{D} < 0$ into
$$
\frac{1}{r_0+f_1} \left[\frac{r_0^{(2)}}{2} + \frac{\rho_2}{\rho_0} f_1 \left(r_1 + \tilde{r}_2 + w_1 \right)\right] < \frac{r_0^{(2)}}{2 r_0}
$$
whose validity is equivalent to
\begin{equation}\label{eq:cond_simple_1}
\left[-\frac{r_0^{(2)}}{2r_0} + \frac{\rho_2}{\rho_0} \left(r_1 + \tilde{r}_2 + w_1 \right)\right] f_1 < 0.
\end{equation}
We recall that $w_i$ and $f_i$ are non-negative quantities.
Moreover, we observe that $f_i>0$ holds for all $T_i>0$.
This can be argued by using the expected sojourn times for Strategy~III and by using the steady-state probabilities for Strategy~II and~IV.

\paragraph{Strategy III.}
Note that we have $r_1 + \tilde{r}_2 = r_0$ according to Remark~\ref{rem:r_cond}.
For all $T_1>0$, we see from~\eqref{eq:wi_III} that $w_1$ is greater than the expected length of the first busy period at station~$1$, i.e., there is a function $\Delta_1(T_1) > 0$ such that
$$
w_1 = (r_0+c_2)\frac{\rho_1}{1-\rho_1} + \Delta_1(T_1). 
$$
We insert this representation of $w_1$ into~\eqref{eq:cond_simple_1}, make use of~\eqref{eq:c2_III} and obtain that~\eqref{eq:cond_simple_1} is equivalent to
\begin{equation}\label{eq:cond_III_long}
-\frac{r_0^{(2)}}{2r_0} + \frac{\rho_2}{\rho_0} \left(\frac{1-\rho_2}{1-\rho_0}r_0 + \frac{\rho_1 \rho_2}{1-\rho_0} \left(T_1 + \frac{q_1(T_1)b_1}{1-\rho_1} \right) + \Delta_1(T_1) \right) < 0.
\end{equation}
Because of the property that both functions $\Delta_1(T_1)$ and $q_1(T_1)$ converge to zero for $T_1 \to 0$, we find the sufficient condition
\begin{equation}\label{eq:cond_III}
\frac{r_0^{(2)}}{2r_0^2} - \frac{\rho_2(1-\rho_2)}{\rho_0(1-\rho_0)} > 0
\end{equation}
for `it is worth waiting at station~$1$'.
In order to establish the necessity of this condition, we argue in the following way:
If we assume that~\eqref{eq:cond_III} does not hold, inequality~\eqref{eq:cond_III_long} is not satisfied for all $T_1>0$ because $\Delta_1(T_1)$ and $q_1(T_1)$ are non-negative, and we see that it is not worth waiting at station~$1$.

\paragraph{Strategy IV.}
The difference to Strategy~III is the fact that $w_1 $ does not have to be greater than the expected length of the first busy period at station~$1$.
We just focus on 
\begin{equation}\label{eq:cond_simple_2}
-\frac{r_0^{(2)}}{2r_0} + \frac{\rho_2}{\rho_0} \left(r_1 + \tilde{r}_2 + w_1 \right) < 0
\end{equation}
from~\eqref{eq:cond_simple_1} and observe the property $w_1 \leq T_1$ because a waiting period ends at the latest when the timer expires.
In the same manner as above, we get the necessary and sufficient condition
$$
\frac{r_0^{(2)}}{2r_0 \left(r_1 + \tilde{r}^{\rm{IV}}_2 \right)} - \frac{\rho_2}{\rho_0} > 0
$$
for `it is worth waiting at station~$1$' with $\tilde{r}^{\rm{IV}}_2$ given by~\eqref{eq:r_IV}.
In the case of deterministic switchover times, we just replace $r_0^{(2)}$ by $r_0^2$ and $\tilde{r}^{\rm{IV}}_2$ by $r_2$.

\paragraph{Strategy II.}
We focus again on~\eqref{eq:cond_simple_2} as with Strategy~IV, and $w_1 \leq T_1$ holds since waiting periods can only happen within the minimum sojourn time~$T_1$.
Differently from Strategy~IV, the conditional mean switchover time~$\tilde{r}^{\rm{II}}_2$ depends on the parameter~$T_1$.

First, we prove that it is worth waiting with Strategy~IV if it is worth waiting with Strategy~II.
Therefore, we assume that there is a $T_1>0$ such that~\eqref{eq:cond_simple_2} holds for Strategy~II.
We have to conclude that~\eqref{eq:cond_IV} is satisfied which can be easily seen if we have $\tilde{r}^{\rm{IV}}_2 \leq \tilde{r}^{\rm{II}}_2(T_1)$ for all $T_1>0$.
We continue with proving this inequality.
We recall
$$
\tilde{r}^{\rm{II}}_2 = \sum_{k=0}^\infty p_k^{(1)} \sum_{j=0}^k \frac{\P(A_{k-j}) \P(B_j)}{\P(C_k)} \, \E[R_2 \mid B_j]
$$
from~\eqref{eq:r_II} and~\eqref{eq:r_cond_Ck}, and
$$
\tilde{r}^{\rm{IV}}_2 = \E[R_2 \mid B_0]
$$
from~\eqref{eq:r_IV}.
We use the representation of $\E[R_2 \mid B_j]$ from~\eqref{eq:r_cond_Bj} and the Cauchy-Schwarz inequality to get
$$
\E[R_2 \mid B_j] \leq \E[R_2 \mid B_{j+1}]
$$
for all $j \in \mathbb{N}_0$.
This property suffices in order to conclude $\tilde{r}^{\rm{IV}}_2 \leq \tilde{r}^{\rm{II}}_2(T_1)$ for all~$T_1>0$.

Next, we have to prove that it is worth waiting with Strategy~II if it is worth waiting with Strategy~IV.
Let~\eqref{eq:cond_IV} be satisfied, i.e., there is a $T^{\rm{IV}}_1 > 0$ such that~\eqref{eq:cond_simple_2} holds for $\tilde{r}^{\rm{IV}}_2$ and $w_1^{\rm{IV}}(T^{\rm{IV}}_1)$.
We are done if there is a $T_1>0$ such that 
$$ 
\tilde{r}^{\rm{II}}_2(T_1) + w_1^{\rm{II}}(T_1) \leq \tilde{r}^{\rm{IV}}_2 + w_1^{\rm{IV}}(T^{\rm{IV}}_1)
$$
because~\eqref{eq:cond_simple_2} is the criterion for `it is worth waiting with Strategy~II' as well.
We observe 
\begin{equation*}
\begin{split}
 \tilde{r}^{\rm{II}}_2 & = p_0^{(1)} \E[R_2 \mid B_0] + \sum_{k=1}^\infty p_k^{(1)} \sum_{j=0}^k \frac{\P(A_{k-j}) \P(B_j)}{\P(C_k)} \, \E[R_2 \mid B_j] \\
 & \leq \E[R_2 \mid B_0] + \sum_{k=1}^\infty p_k^{(1)} \E[R_2 \mid B_k]
\end{split}
\end{equation*}
and define $\eps \coloneqq \frac{w_1^{\rm{IV}}(T^{\rm{IV}}_1)}{2}$.
Due to $ \tilde{r}^{\rm{IV}}_2 = \E[R_2 \mid B_0]$ and $w_1^{\rm{II}}(T_1) \leq T_1$,
it suffices to show that there is a positive $T_1 < \eps$ such that
$$
\sum_{k=1}^\infty p_k^{(1)} \E[R_2 \mid B_k] < \eps.
$$
We recall
$$
p_k^{(1)} = \frac{\pi_k^{(1)} \int_0^{T_1} P_{k,0}(x) \diff x}{\sum_{l=0}^\infty \pi_l^{(1)} \int_0^{T_1} P_{l,0}(x) \diff x}
$$
from~\eqref{eq:p_k}.
First, we estimate the quantity $\int_0^{T_1} P_{k,0}(x) \diff x$ that is the expected length of the total waiting time during the stay at station~$1$ given that there are $k$ messages waiting upon arrival.
We get
\begin{equation*}
\begin{split}
 \int_0^{T_1} P_{0,0}(x) \diff x & \geq T_1 \, \P(\text{no message arrives at station~$1$ within the time $T_1$}) \\
 & = T_1 e^{-\lambda_1 T_1}
\end{split}
\end{equation*}
and
\begin{equation*}
\begin{split}
 \int_0^{T_1} P_{k,0}(x) \diff x & \leq T_1 \, \P(\text{the length of the first busy period} \leq T_1) \\
 & \leq T_1 \, \P(\text{the sum of $k$ independent service times} \leq T_1) \\
 & \leq T_1 \left(1- e^{-\mu_1 T_1} \sum_{j=0}^{k-1} \frac{(\mu_1 T_1)^j}{j!} \right) \\
 & = T_1 e^{-\mu_1 T_1} \left(e^{\mu_1 T_1} - \sum_{j=0}^{k-1} \frac{(\mu_1 T_1)^j}{j!} \right) \\
 & = T_1 e^{-\mu_1 T_1} \sum_{j=k}^{\infty} \frac{(\mu_1 T_1)^j}{j!} \\
 & = T_1 e^{-\mu_1 T_1} (\mu_1 T_1)^{k} \sum_{j=0}^{\infty} \frac{(\mu_1 T_1)^{j}}{(j+k) \cdots (j+1)j!} \\
 & \leq T_1 (\mu_1 T_1)^{k}
\end{split}
\end{equation*}
for all $k \in \mathbb{N}$ where we use the Erlang($k,\mu_1$) distribution function in the third line.
Now, we can bound $p_k^{(1)}$ for all $k \in \mathbb{N}$ from above by
$$
p_k^{(1)} \leq \frac{T_1 (\mu_1 T_1)^{k}}{\pi_0^{(1)} T_1 e^{-\lambda_1 T_1} } = \frac{e^{\lambda_1 T_1}}{\pi_0^{(1)}} (\mu_1 T_1)^{k}.
$$
Using Lemmas~\ref{lem:r_cond} and~\ref{lem:pi_0} with $\bar{T}_1 \coloneqq \frac{1}{\mu_1}$ in the first two lines and using limits of geometric series, we obtain for $T_1 \in \left(0, \bar{T}_1 \right)$ with $q \coloneqq \mu_1 T_1 < 1$
\begin{equation*}
\begin{split}
 \sum_{k=1}^\infty p_k^{(1)} \E[R_2 \mid B_k] & \leq \sum_{k=1}^\infty \frac{e^{\lambda_1 T_1} }{\pi_0^{(1)}} (\mu_1 T_1)^{k} \alpha \left(k^2 + 1 \right) \\
 & \leq \frac{e^{\frac{\lambda_1}{\mu_1}}}{\pi_{\rm{inf}}} \alpha \left(\sum_{k=1}^\infty  k^2 q^{k} + \sum_{k=1}^\infty q^{k} \right) \\
 & = \frac{e^{\frac{\lambda_1}{\mu_1}} }{\pi_{\rm{inf}}} \alpha \left(\frac{q(1+q)}{(1-q)^3} + \frac{q}{1-q} \right).
\end{split}
\end{equation*}
Finally, we are done because the term in the last line converges to zero for $T_1 \to 0$.
\hspace*{\fill}$\square$

\subsubsection{\texorpdfstring{Proof of Theorem~\ref{thm:cond_2_sym}}{Proof of Theorem 5}}
We focus on inequality $\Delta \bar{D} <0$ which can be rearranged into
$$
\frac{1}{r_0+f_0} \left[\frac{r_0^{(2)}}{2} + \frac{\rho_2}{\rho_0} f_1 \left(r_1 + \tilde{r}_2 + w_1 \right) + \frac{\rho_1}{\rho_0} f_2 \left(\tilde{r}_1 + r_2 + w_2 \right)\right] < \frac{r_0^{(2)}}{2 r_0}.
$$

\paragraph{Strategy III.}
We can proceed in an analogous manner as in the proof of Theorem~\ref{thm:cond_2}.
Using the symmetry $\rho_1 = \rho_2$, we obtain the necessary and sufficient condition
$$
\frac{r_0^{(2)}}{r_0^2} - \frac{1-\rho_1}{1-\rho_0} >  0
$$
for `it is worth waiting' at both stations with $T_1=T_2>0$.

\paragraph{Strategy II and IV.}
In addition to the procedure in the proofs above, we have to extend Lemma~\ref{lem:pi_0} by setting $T_1=T_2>0$.
Then, for a totally symmetric polling model we get the necessary and sufficient condition
\begin{equation}\label{eq:cond_IV_sym}
\frac{r_0^{(2)}}{r_0 \left(r_1 + \tilde{r}^{\rm{IV}}_2 \right)} > 1 
\end{equation}
for `it is worth waiting' at both stations in the same way.
A short calculation shows that $\tilde{r}^{\rm{IV}}_2 \leq r_2$ holds.
Therefore, we can conclude that~\eqref{eq:cond_IV_sym} is satisfied if and only if the switchover times are non-deterministic.
\hspace*{\fill}$\square$

\subsubsection{\texorpdfstring{Proof of Corollary~\ref{cor:conseq}}{Proof of Corollary 6}}
We just have to set the system parameters such that the condition (inequality) in Theorem~\ref{thm:cond_2} or~\ref{thm:cond_2_sym} is fulfilled for Strategy~II and~IV but not for Strategy~III.
\hspace*{\fill}$\square$

\paragraph{Acknowledgement.}
The work of S.\ Schwinn is supported by the `Excellence Initiative' of the German Federal and State Governments via the Graduate School of Computational Engineering at Technische Universit\"at Darmstadt.


\begin{thebibliography}{10}
\bibitem{afanassieva}
L.~G. Afanassieva, F.~Delcoigne, and G.~Fayolle.
\newblock On polling systems where servers wait for customers.
\newblock {\em Markov Processes and Related Fields}, 3(4):527--545, 1997.

\bibitem{alhanbali}
A.~Al~Hanbali, R.~de~Haan, R.~J. Boucherie, and J.-K. van Ommeren.
\newblock Time-limited polling systems with batch arrivals and phase-type
  service times.
\newblock {\em Annals of Operations Research}, 198(1):57--82, 2012.

\bibitem{asmussen}
S.~Asmussen.
\newblock {\em Applied probability and queues}.
\newblock Wiley series in probability and mathematical statistics. Wiley,
  Chichester, 1987.

\bibitem{aurzada}
F.~Aurzada, S.~Beck, and M.~Scheutzow.
\newblock Wait-and-see strategies in polling models.
\newblock {\em Probability in the Engineering and Informational Sciences},
  26(1):17--42, Jan. 2012.

\bibitem{boxma1987}
O.~J. Boxma and W.~P. Groenendijk.
\newblock Pseudo-conservation laws in cyclic-service systems.
\newblock {\em Journal of Applied Probability}, 24(4):949--964, 1987.

\bibitem{boxma2002}
O.~J. Boxma, S.~Schlegel, and U.~Yechiali.
\newblock Two-queue polling models with a patient server.
\newblock {\em Annals of Operations Research}, 112(1):101--121, 2002.

\bibitem{cooper}
R.~B. Cooper, S.-C. Niu, and M.~M. Srinivasan.
\newblock When does forced idle time improve performance in polling models?
\newblock {\em Management Science}, 44(8):1079--1086, 1998.

\bibitem{dehaanphd}
R.~de~{Haan}.
\newblock {\em Queueing models for mobile ad hoc networks}.
\newblock PhD thesis, University of Twente, Enschede, June 2009.

\bibitem{dehaan}
R.~de~Haan, R.~J. Boucherie, and J.-K. van Ommeren.
\newblock A polling model with an autonomous server.
\newblock {\em Queueing Systems}, 62(3):279--308, 2009.

\bibitem{souza}
E.~de~Souza~e Silva, H.~R. Gail, and R.~R. Muntz.
\newblock Polling systems with server timeouts and their application to token
  passing networks.
\newblock {\em IEEE/ACM Transactions on Networking}, 3(5):560--575, Oct. 1995.

\bibitem{yechiali}
I.~Eliazar and U.~Yechiali.
\newblock Polling under the randomly timed gated regime.
\newblock {\em Communications in Statistics. Stochastic Models},
  14(1-2):79--93, 1998.

\bibitem{frigui}
I.~Frigui and A.-S. Alfa.
\newblock Analysis of a time-limited polling system.
\newblock {\em Computer Communications}, 21(6):558--571, 1998.

\bibitem{kleinrock}
L.~Kleinrock.
\newblock {\em Queueing systems Volume I: Theory}.
\newblock Wiley, New York, 1975.

\bibitem{leung}
K.~K. Leung.
\newblock Cyclic-service systems with nonpreemptive, time-limited service.
\newblock {\em IEEE Transactions on Communications}, 42(8):2521--2524, Aug.
  1994.

\bibitem{li}
J.~Z. Li.
\newblock Two-queue polling model with a timer and a randomly-timed gated
  mechanism.
\newblock {\em Journal of Mathematical Research and Exposition},
  29(4):721--729, July 2009.

\bibitem{pekoz}
E.~A. Pek\"{o}z.
\newblock More on using forced idle time to improve performance in polling
  models.
\newblock {\em Probability in the Engineering and Informational Sciences},
  13(04):489--496, Oct. 1999.

\bibitem{samaddar}
S.~Samaddar and T.~Whalen.
\newblock Improving performance in cyclic production systems by using forced
  variable idle setup time.
\newblock {\em Manufacturing \& Service Operations Management}, 10(2):173--180,
  Apr. 2008.

\bibitem{takagi}
H.~Takagi.
\newblock {\em Analysis of polling systems}.
\newblock The MIT Press, Cambridge, 1986.

\bibitem{xie}
J.~Xie, M.~J. Fischer, and C.~M. Harris.
\newblock Workload and waiting time in a fixed-time loop system.
\newblock {\em Computers \& Operations Research}, 24(8):789--803, 1997.
\end{thebibliography}
\end{document}